\newtheorem{thm}{Theorem}[section]
\newtheorem{cor}[thm]{Corollary}
\newtheorem{lem}[thm]{Lemma}
\newtheorem{prop}[thm]{Proposition}
\theoremstyle{definition}
\theoremstyle{remark}
\newtheorem{rem}[thm]{Remark}
\numberwithin{equation}{section}
\begin{document}
\title[An Efficient Abstract Method For The Study...]{An Efficient Abstract Method For The Study of An Initial Boundary
Value Problem On Singular Domain}

\author{Belkacem Chaouchi}
\address{Lab. de l'Energie et des Syst\' emes Intelligents, Khemis Miliana University, 44225 Khemis Miliana, Algeria}
\email{chaouchicukm@gmail.com}

\author{Marko Kosti\' c}
\address{Faculty of Technical Sciences,
University of Novi Sad,
Trg D. Obradovi\' ca 6, 21125 Novi Sad, Serbia}
\email{marco.s@verat.net}

{\renewcommand{\thefootnote}{} \footnote{2010 {\it Mathematics
Subject Classification.} 34G10; 34K30, 35J25, 47D03.
\\ \text{  }  \ \    {\it Key words and phrases.} Abstract differential equations of second
order, variable operator coefficients, anti-periodic boundary conditions, H\"{o}lder
spaces.
\\  \text{  }  \ \ The second named author is partially supported by grant 174024 of Ministry
of Science and Technological Development, Republic of Serbia.}}

\begin{abstract}
The present work is devoted to the study of a boundary value problem for
second order linear differential equation set on singular cylindrical
domain. This problem can be regarded via a natural change of variables as an
elliptic abstract differential equation with variable operators coefficients
subject to some anti-periodic conditions. The complete study of this
abstract version allows us to establish some interesting regularity results
for our problem. The study is performed in the framework of H\"{o}lder
spaces.
\end{abstract}

\maketitle

\section{Introduction and preliminaries}

In \cite{chaouchimjm} and \cite{chaouchi 2}, the solvability of some BVP's
set on cusp domain was discussed. The authors opted for the use of the
abstract differential equations theory and some regularity results for these
problems were successfully established in the framework of Little H\"{o}lder
spaces. In the same direction, we will show that this approach can be
exploited in order to give a complete study of an initial boundary value
problem involving the Laplace operator and which is also posed on nonsmooth
domain. More precisely, we consider a particular conical domain $\Omega $
given by 
\begin{equation*}
\Pi =\left[ 0,T\right] \times \Omega ,
\end{equation*}%
where%
\begin{equation*}
\Omega =\left \{ \left( x,y\right) \in 
\mathbb{R}
^{2}:\sqrt{x^{2}+y^{2}}\leq \varphi \left( t\right) \right \} .
\end{equation*}%
Here, $\varphi $ is a positive real-valued function of parametrization
defined on $\left[ 0,1\right] $ such that%
\begin{equation*}
\begin{array}{c}
\varphi \left( 0\right) =\varphi ^{\prime }\left( 0\right) =0.%
\end{array}%
\end{equation*}%
In $\Pi ,$ we consider the following problem 
\begin{equation}
\begin{array}{c}
\partial _{t}^{2}u+\Delta u-\lambda u=h,\text{ }\lambda >0,%
\end{array}
\label{wave operator}
\end{equation}%
corresponding to the following initial conditions 
\begin{equation}
\begin{array}{lll}
\left. u\right \vert _{t=0}+\left. u\right \vert _{D\left( T,\varphi \left(
T\right) \right) }=0, &  & \left. \partial _{t}u\right \vert _{t=0}+\left.
\partial _{t}u\right \vert _{D\left( T,\varphi \left( T\right) \right) }=0,%
\end{array}
\label{initial conditions}
\end{equation}%
where $D\left( T,\varphi \left( T\right) \right) $ denotes the disc of
radius $\varphi \left( T\right) $ centred at $(T,0,0).$

We accompany (\ref{wave operator})-(\ref{initial conditions}) with following
additional conditions%
\begin{equation}
\begin{array}{ccc}
\left. u\right \vert _{\partial \Pi \backslash D\left( T,\varphi \left(
T\right) \right) }=0, &  & \left. u\right \vert _{\partial \Pi \backslash
\left \{ 0\right \} }=0.%
\end{array}
\label{boundary conditions}
\end{equation}%
We assume that the right hand term of (\ref{wave operator}) is taken in the
anistropic H\"{o}lder space $C^{2\theta }\left( \left[ 0,T\right] ;C\left(
\Omega \right) \right) ,0<2\theta <1$, defined by 
\begin{equation*}
\left \{ \phi \in C(\left[ 0,T\right] ;C(\Omega )):\lim \limits_{\varepsilon
\rightarrow 0^{+}}\sup \limits_{0<\left \vert t-t^{\prime }\right \vert \leq
\varepsilon }\dfrac{\left \Vert \phi (t)-\phi (t^{\prime })\right \Vert
_{C(\Omega )}}{\left \vert t-t^{\prime }\right \vert ^{2\theta }}<\infty
\right \} .
\end{equation*}%
Now, we consider the following change of variables 
\begin{equation}
\begin{array}{ll}
T: & \Pi \rightarrow Q, \\ 
& \left( t,x,y\right) \mapsto \left( t,\xi ,\eta \right) =\left( t,\dfrac{x}{%
\varphi \left( t\right) },\dfrac{y}{\varphi \left( t\right) }\right) ,%
\end{array}
\label{change of variables}
\end{equation}%
where%
\begin{equation*}
Q=\left[ 0,T\right] \times D,
\end{equation*}%
with%
\begin{equation*}
D:=D\left( 0,1\right) =\left \{ \left( \xi ,\eta \right) \in 
\mathbb{R}
^{2}:\xi ^{2}+\eta ^{2}\leq 1\right \} .
\end{equation*}%
Define the following change of functions%
\begin{equation}
\begin{array}{l}
u\left( t,x,y\right) =v\left( t,\xi ,\eta \right) \text{ and }h\left(
t,x,y\right) =f\left( t,\xi ,\eta \right) .%
\end{array}
\label{change of functions}
\end{equation}%
It follows from the change of functions (\ref{change of functions}) that the
new version of problem (\ref{wave operator}) is given by 
\begin{equation}
\begin{array}{ll}
\partial _{t}^{2}v+L\left( t\right) v-\lambda v=f, & \text{in }Q \\ 
\left. v\right \vert _{\left \{ 0\right \} \times D}+\left. v\right \vert
_{\left \{ T\right \} \times D}=0, &  \\ 
\left. \partial _{t}v\right \vert _{\left \{ 0\right \} \times D}+\left.
\partial _{t}v\right \vert _{\left \{ T\right \} \times D}=0, &  \\ 
\left. v\right \vert _{\left[ 0,T\right] \times \partial D}=0, & 
\end{array}
\label{transformed problem}
\end{equation}%
where $L$ is the linear operator with singular coefficients given by%
\begin{equation*}
L\left( t\right) =\frac{1}{\varphi ^{2}\left( t\right) }\Delta +\frac{%
\varphi ^{\prime }\left( t\right) }{\varphi \left( t\right) }\left \{ \xi
\partial _{\xi }+\eta \partial _{\eta }\right \} ,\text{ }0\leq t\leq T.
\end{equation*}%
The following lemma is needed in order to clarify the impact of the change
of variables (\ref{change of variables}) on the functional framework of H%
\"{o}lder Spaces.

\begin{lem}
\label{effect of change of variables}\textit{Let} $0<2\theta <1$\textit{.
Then}
\end{lem}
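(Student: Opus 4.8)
The statement to be established is the equivalence, under the change of variables (\ref{change of variables})--(\ref{change of functions}), of the two anisotropic H\"older spaces: concretely, that $h\in C^{2\theta}([0,T];C(\Omega))$ if and only if $f\in C^{2\theta}([0,T];C(D))$, with equivalent norms. The plan is to treat the two constituents of the norm separately. First I would dispose of the sup-norm part. For each fixed $t$ the spatial map $(x,y)\mapsto(\xi,\eta)=(x/\varphi(t),y/\varphi(t))$ is a homeomorphism of the $t$-section onto the unit disc $D$, and since the supremum of $\abs{\,\cdot\,}$ is invariant under a bijective reparametrisation of the domain one obtains the exact identity $\norm{f(t,\cdot)}_{C(D)}=\norm{h(t,\cdot)}_{C(\Omega)}$ for every $t$. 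This immediately identifies the $C([0,T];C(\cdot))$ contributions on the two sides.

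The substance of the lemma lies in the H\"older seminorm in the variable $t$. Writing $f(t,\xi,\eta)=h(t,\varphi(t)\xi,\varphi(t)\eta)$ and inserting an intermediate term, I would use the decomposition
\begin{equation*}
f(t,\xi,\eta)-f(t',\xi,\eta)=\bigl[h(t,\varphi(t)\xi,\varphi(t)\eta)-h(t',\varphi(t)\xi,\varphi(t)\eta)\bigr]+\bigl[h(t',\varphi(t)\xi,\varphi(t)\eta)-h(t',\varphi(t')\xi,\varphi(t')\eta)\bigr].
\end{equation*}
The first bracket is a pure increment of $h$ in time at a fixed spatial point of the relevant section, hence it is bounded by $\abs{t-t'}^{2\theta}$ times the H\"older seminorm of $h$, contributing exactly the desired rate. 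In the second bracket the two spatial arguments differ only through the factor $\varphi(t)-\varphi(t')$ applied to $(\xi,\eta)\in D$, so its size is governed jointly by the uniform spatial modulus of continuity of $h(t',\cdot)$ and by the increment $\abs{\varphi(t)-\varphi(t')}$; invoking the regularity of $\varphi$ to write $\abs{\varphi(t)-\varphi(t')}\le\norm{\varphi'}_\infty\abs{t-t'}$ turns this displacement into a power of $\abs{t-t'}$ that must be shown to absorb into the $\abs{t-t'}^{2\theta}$ denominator. The reverse inclusion, started from $h(t,x,y)=f(t,x/\varphi(t),y/\varphi(t))$, follows by the symmetric decomposition, and the two seminorms are then seen to be mutually dominated.

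The step I expect to be the main obstacle is precisely the control of this spatial-displacement term, i.e. reconciling the \emph{$t$-dependent} spatial rescaling with the time-H\"older seminorm. Away from the origin, where $\varphi$ is nondegenerate, $\abs{\varphi(t)-\varphi(t')}$ is comparable to $\abs{t-t'}$, and closing the estimate through the denominator $\abs{t-t'}^{2\theta}$ requires extracting from the data the appropriate spatial H\"older control; near $t=0$, by contrast, the degeneracy $\varphi(0)=\varphi'(0)=0$ makes $\varphi(t)-\varphi(t')$ of strictly higher order in $\abs{t-t'}$, so that the singular scaling $1/\varphi(t)$ is compensated rather than amplified. The delicate point is therefore to patch these two regimes into a single estimate that is uniform across the cusp endpoint, and it is this uniformity, rather than either regime in isolation, that carries the weight of the proof. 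Once it is secured, the comparability of the H\"older seminorms follows and the asserted isomorphism of the anisotropic H\"older spaces is complete.
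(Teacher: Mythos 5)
You have misread the statement, and the misreading matters. The lemma is not an equivalence of $C^{2\theta}([0,T];C(\Omega))$ and $C^{2\theta}([0,T];C(D))$ ``with equivalent norms'': part (1) is a one-way implication, and part (2) sends $f\in C^{2\theta}([0,T];C(D))$ only into the \emph{weighted} space $C_{w}^{2\theta}([0,T];C(\Omega))$, whose definition involves the factor $\left(\varphi(\cdot)\right)^{2\theta}$. That weight never appears anywhere in your argument, and there is no ``asserted isomorphism of the anisotropic H\"older spaces'' to prove. The asymmetry between the two directions is forced precisely by the degeneracy $\varphi(0)=\varphi'(0)=0$ that you relegate to a side remark; a symmetric two-sided scheme cannot produce the weighted conclusion of part (2).

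Beyond the misstatement, the analytic core of your argument is an acknowledged hole. Your decomposition isolates the term $h(t',\varphi(t)\xi,\varphi(t)\eta)-h(t',\varphi(t')\xi,\varphi(t')\eta)$ and proposes to bound it by ``the uniform spatial modulus of continuity of $h(t',\cdot)$'' combined with $\abs{\varphi(t)-\varphi(t')}\le\norm{\varphi'}_{\infty}\abs{t-t'}$. But membership in $C^{2\theta}([0,T];C(\Omega))$ gives H\"older control in $t$ only; in the spatial variables $h(t',\cdot)$ is merely continuous, with no quantitative modulus available, so there is nothing with which to convert a spatial displacement of size $\abs{\varphi(t)-\varphi(t')}$ into a bound by $\abs{t-t'}^{2\theta}$. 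You identify this step yourself as ``the main obstacle'' and then do not supply it; as written the proposal is a plan for a proof rather than a proof. For what it is worth, the paper itself gives no internal argument either --- it defers entirely to Proposition 3.1 of \cite{chaouchimjm} --- but what it defers to is the asymmetric, weighted statement actually displayed in the lemma, not the two-sided norm equivalence you set out to establish.
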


\begin{enumerate}
\item $h\in C^{2\theta }(\left[ 0,T\right] ;C(\Omega ))\Rightarrow f\in
C^{2\theta }(\left[ 0,T\right] ;C(D))$\textit{.}

\item $f\in C^{2\theta }(\left[ 0,T\right] ;C(D))\Rightarrow h\in
C_{w}^{2\theta }(\left[ 0,T\right] ;C(\Omega ))$ with 
\begin{equation*}
C_{w}^{2\theta }(\left[ 0,T\right] ;C(\Omega )))=\left \{ h\in C^{2\theta }(%
\left[ 0,T\right] ;C(\Omega )) : \left( \varphi \left(
.\right) \right) ^{2\theta }h\in C^{2\theta }(\left[ 0,T\right] ;C(\Omega
))\right \} .
\end{equation*}
\end{enumerate}

\begin{proof}
See Proposition 3.1 in \cite{chaouchimjm}.
\end{proof}

Due to the presence of a singular coefficients, we must approximate
the cylinder $\Pi $ by a sequence of regular subdomains. As in \cite%
{sadallah}, we perform the following regular change of variables given by 
\begin{equation*}
\begin{array}{l}
\Pi _{n}:=\left[ t_{n},T\right] \times \Omega \rightarrow Q_{n}:=\left[
t_{n},T\right] \times D\left( 0,1\right) , \\ 
\left( t,x,y\right) \mapsto \left( t,\xi ,\eta \right) =\left( t,\dfrac{x}{%
\varphi \left( t\right) },\dfrac{y}{\varphi \left( t\right) }\right) .%
\end{array}%
\end{equation*}%
Here, $\left( t_{n}\right) _{n\in 
\mathbb{N}
}$ is a decreasing sequence such that $0\leq t_{n}\leq 1$ and $%
\lim \limits_{n\rightarrow +\infty }t_{n}=0.$

Set%
\begin{equation*}
\left \{ 
\begin{array}{l}
v_{n}=\left. v\right \vert _{Q_{n}}, \\ 
\\ 
f_{n}=\left. f\right \vert _{Q_{n}}.%
\end{array}%
\right. 
\end{equation*}%
Summing up, we are confronted to the study of following problems 
\begin{equation}
\begin{array}{lc}
\partial _{t}^{2}v_{n}+L\left( t\right) v_{n}-\lambda v_{n}=f_{n}, & \text{
in }Q_{n}, \\ 
\left. v_{n}\right \vert _{\left \{ t_{n}\right \} \times D}+\left.
v_{n}\right \vert _{\left \{ T\right \} \times D}=0, &  \\ 
\left. \partial _{t}v_{n}\right \vert _{\left \{ t_{n}\right \} \times
D}+\left. \partial _{t}v_{n}\right \vert _{\left \{ T\right \} \times D}=0, & 
\\ 
\left. v_{n}\right \vert _{\left[ t_{n},T\right] \times \partial D}=0. & 
\end{array}
\label{complete transformed problem inQn}
\end{equation}%
Here, we just briefly note that 
\begin{equation*}
\lim \limits_{n\rightarrow +\infty }v_{n}=\left. v\right \vert
_{\lim \limits_{n\rightarrow +\infty }Q_{n}}=\left. v\right \vert _{Q}=v.
\end{equation*}%
In the next section, we will show that our transformed problems (\ref%
{complete transformed problem inQn}) can be formulated as a second order
abstract differential equation of elliptic type with variable operators
coefficients.

\section{The abstract formulation of the problems (\protect \ref{complete
transformed problem inQn})}

Let us introduce the following vector-valued functions:%
\begin{eqnarray*}
v_{n} &:&\left[ t_{n},T\right] \rightarrow E;\text{ }t\longrightarrow
v_{n}(t);\text{\quad }v_{n}(t)(\xi ,\eta )=v_{n}\left( t,\xi ,\eta \right) ,
\\
f_{n} &:&\left[ t_{n},T\right] \rightarrow E;\text{ }t\longrightarrow
f_{n}(t);\quad f_{n}(t)(\xi ,\eta )=f_{n}\left( t,\xi ,\eta \right) ,
\end{eqnarray*}%
with $E=C(D)$. So, the transformed problem (\ref{complete transformed
problem inQn}) can be formulated as follows 
\begin{equation}
\begin{array}{ll}
v_{n}^{\prime \prime }(t)+A\left( t\right) v_{n}(t)-\lambda
v_{n}(t)=f_{n}(t), & t_{n}\leq t\leq T, \\ 
v_{n}(t_{n})+v_{n}(T)=0, &  \\ 
v_{n}^{\prime }(t_{n})+v_{n}^{\prime }(T)=0. & 
\end{array}
\label{approached ADE in Qn}
\end{equation}%
Here, $\left( A\left( t\right) \right) _{t_{n}\leq t\leq T}$ is a family of
closed linear operators with domains $D(A(t))$ (which are not dense) defined by
\begin{equation}
\left \{ 
\begin{array}{l}
D\left( A\left( t\right) \right) :=\left \{ \phi \in W^{2,p}(D)\cap C_{0}(D),%
\text{ }p>2:L\left( t\right) \phi \in C_{0}(D)\right \} ,\text{ }t_{n}\leq
t\leq T, \\ 
\left( A\left( t\right) \right) \phi \left( \xi ,\eta \right) :=\left(
L\left( t\right) \right) \phi \left( \xi ,\eta \right) .%
\end{array}%
\right.  \label{Operator Ln}
\end{equation}%
Consider the natural change of function%
\begin{equation*}
w_{n}(t)=v_{n}(t+t_{n})\text{ and\ }g_{n}(t)=f_{n}(t+t_{n});
\end{equation*}%
then 
\begin{equation}
g_{n}\in C^{2\theta }(\left[ 0,T\right] ;C(D));  \label{RegularityOfF}
\end{equation}%
and $w_{n}$ is the eventual solution of 
\begin{equation}
\left \{ 
\begin{array}{l}
w_{n}^{\prime \prime }(t)+A(t+t_{n})w_{n}(t)-\lambda w_{n}(t)=g_{n}(t),\text{%
\ }0\leq t\leq T, \\ 
w_{n}(0)+w_{n}(T)=0, \\ 
w_{n}^{\prime }(0)+w_{n}^{\prime }(T)=0,%
\end{array}%
\right.
\end{equation}

From \cite{Aquistapace-Terreni} p. 60, we know that the family $\left(
A(t+t_{n})\right) _{0\leq t\leq T}$ enjoys the following three properties:

\begin{enumerate}
\item 
\begin{equation}
\exists M>0,\text{ }\forall z\geqslant 0,\forall t\in \lbrack 0,T]\text{ \ }%
\left \Vert (A_{n}(t+t_{n})-z)^{-1}\right \Vert _{L(E)}\leqslant \dfrac{M}{z+1}%
;  \label{Hypoth1}
\end{equation}

\item \textit{For all }$z\geqslant 0$\textit{, the application }$t\mapsto
(A_{n}(t+t_{n})-\lambda -z)^{-1}$\textit{\ defined on }$\left[ 0,T\right] $%
\textit{\ is in }$C^{2}(\left[ 0,T\right] ;L(E))$\textit{\ and there exist }$%
C>0$\textit{\ such that :}%
\begin{equation}
\forall z\geqslant 0,\forall t\in \lbrack 0,T]\  \text{\ }\left \Vert \frac{%
\partial }{\partial t}(A(t+t_{n})-\lambda -zI)^{-1}\right \Vert
_{L(E)}\leqslant \dfrac{C}{z+1},  \label{Hypoth2}
\end{equation}%
\textit{and}%
\begin{equation}
\forall z\geqslant 0,\forall t\in \lbrack 0,T]\  \text{\ }\left \Vert \frac{%
\partial ^{2}}{\partial t^{2}}(A(t+t_{n})-\lambda -zI)^{-1}\right \Vert
_{L(E)}\leqslant \dfrac{C}{z+1};  \label{Hypothe 3}
\end{equation}

\item \textit{Moreover, one has : }$\forall z\geqslant 0,\forall t,s\in %
\left[ 0,T\right] $%
\begin{equation}
\left \{ 
\begin{array}{l}
\left \Vert \dfrac{\partial ^{2}}{\partial t^{2}}(A(t+t_{n})-\lambda -z)^{-1}-%
\dfrac{\partial ^{2}}{\partial s^{2}}(A(s+s_{n})-\lambda -z)^{-1}\right \Vert
_{L(E)}\leqslant \dfrac{C\left \vert t-s\right \vert ^{2\theta }}{z+1}, \\ 
\\ 
\left \Vert \dfrac{\partial ^{2}}{\partial t^{2}}(A(t+t_{n})-\lambda -z)^{-1}-%
\dfrac{\partial ^{2}}{\partial s^{2}}(A(s+s_{n})-\lambda -z)^{-1}\right \Vert
_{L(E)}\leqslant \dfrac{C\left \vert t-s\right \vert ^{2\theta }}{z+1}.%
\end{array}%
\right.   \label{Hypothese4}
\end{equation}
\end{enumerate}

\begin{rem}
In the sequel the symbol $C$ stands for a generic positive constant except
when other dependence is stated explicitly. On the other hand, it is
important to note that all the constants given above are independent of $t$
and and consequently of $n$.
\end{rem}

\section{Some Regularity Results}

We are concerned with a study of the problem 
\begin{equation}
\left \{ 
\begin{array}{l}
w_{n}^{\prime \prime }(t)+A_{n}(t)w_{n}(t)-\lambda w_{n}(t)=g_{n}(t),\text{\ 
}0\leq t\leq T, \\ 
w_{n}(0)+w_{n}(T)=0, \\ 
w_{n}^{\prime }(0)+w_{n}^{\prime }(T)=0,%
\end{array}%
\right.  \label{EquVariable}
\end{equation}%
where :

\begin{itemize}
\item $g_{n}\in C^{2\theta }\left( \left[ 0,T\right] ,E\right) $, $2\theta
\in ]0,1[$,

\item $A_{n}(t):=A(t+t_{n}).$
\end{itemize}

Our purpose is to establish some results about the existence, uniqueness and
maximal regularity of a strict solution $w_{n}$ for Problem (\ref%
{EquVariable}) by building explicitly a representation of the solution $%
w_{n}(t)$ and studying its optimal regularity. Recall here that a strict
solution is a function $w_{n}$ such that%
\begin{equation*}
\left \{ 
\begin{array}{l}
w_{n}\in C^{2}\left( \left[ 0,T\right] ,E\right) , \\ 
w_{n}\left( t\right) \in D\left( A\left( t\right) \right) \text{ for every }%
t\in \left[ 0,T\right] , \\ 
t\mapsto \left( A_{n}(t)-\lambda \right) w_{n}(t)\in C\left( \left[ 0,T%
\right] ,E\right) ,%
\end{array}%
\right.
\end{equation*}%
and satisfying the anti-periodic boundary conditions%
\begin{equation*}
\begin{array}{l}
w_{n}(0)+w_{n}(T)=0, \\ 
w_{n}^{\prime }(0)+w_{n}^{\prime }(T)=0.%
\end{array}%
\end{equation*}%
The techniques used here are essentially based on the Dunford functional
calculus and the methods applied in \cite{Aquistapace-Terreni}, \cite%
{LabbasTanabe1}-\cite{Labbas} and \cite{Yagi}. We know that if $A_{n}(t)$
is a constant operator satisfying (\ref{Hypoth1}), the representation of the
solution $w_{n}$ is given by the formula%
\begin{equation}
w_{n}(t)=-\frac{1}{2i\pi }\int_{\gamma }\int_{0}^{T}K_{\sqrt{-z}}\left(
t,s\right) (A_{n}-\lambda -z)^{-1}g_{n}(s)dsdz  \label{ReprConstant}
\end{equation}%
where 
\begin{equation}
K_{\sqrt{-z}}\left( t,s\right) =\left \{ 
\begin{array}{ll}
\dfrac{e^{-\sqrt{-z}\left( t-s\right) }-e^{-\sqrt{-z}\left( T-t+s\right) }}{2%
\sqrt{-z}\left( 1+e^{-T\sqrt{-z}}\right) }, & 0\leq s\leq t, \\ 
\dfrac{e^{-\sqrt{-z}\left( s-t\right) }-e^{-\sqrt{-z}\left( T+t-s\right) }}{2%
\sqrt{-z}\left( 1+e^{-T\sqrt{-z}}\right) }, & t\leq s\leq T,%
\end{array}%
\right.  \label{greenkernel}
\end{equation}%
and the curve $\gamma $ is the retrograde oriented boundary of the sector $%
\Pi _{\theta _{0},r_{0}}$ of the form 
\begin{equation}
\Pi _{\delta _{0},r_{0}}=\left \{ z\in 
\mathbb{C}
-\left \{ 0\right \} :\left \vert \arg (z)\right \vert \leqslant \delta
_{0}\right \} \cup \left \{ z\in 
\mathbb{C}
:\left \vert z\right \vert \leqslant r_{0}\right \} ,  \label{secteur}
\end{equation}%
with some small $\delta _{0}>0$, and $r_{0}>0$. (Here, $\rho (A_{n})$ is the
resolvent set of $A_{n}$).

First, it is necessary to note here that

\begin{lem}
\label{LemmeMinDenomP+}There exists $C\left( \delta _{0}\right) >0$ such
that for all $z\in \Pi _{\delta _{0},r_{0}}$, one has :%
\begin{equation*}
\left \vert 1+e^{-T\sqrt{-z}}\right \vert \geqslant C\left( \delta
_{0}\right) .
\end{equation*}
\end{lem}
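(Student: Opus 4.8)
The plan is to estimate $\abs{1+e^{-T\sqrt{-z}}}$ from below by isolating the behaviour of $e^{-T\sqrt{-z}}$ for large $\abs{z}$ from its behaviour on a compact core, the whole point being that the principal branch keeps $\RE(\sqrt{-z})\ge 0$ on the entire region. I fix the branch of the square root with its cut along the ray where $-z$ is real and negative (equivalently, along the positive real $z$-axis), so that for every $z\in\Pi_{\delta_0,r_0}$ one has $\RE(\sqrt{-z})\ge 0$; indeed, on the sectorial part $-z$ remains within angle $\delta_0$ of the positive real axis — this is the spectral side encircled by $\gamma$, where $\sigma(A_n-\lambda)\subset(-\infty,-\lambda]$ lives — while on the disc $\abs{z}\le r_0$ the estimate is a local matter. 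Since $\abs{e^{-T\sqrt{-z}}}=e^{-T\RE(\sqrt{-z})}\le 1$, the triangle inequality gives
\begin{equation*}
\abs{1+e^{-T\sqrt{-z}}}\ge 1-\abs{e^{-T\sqrt{-z}}}=1-e^{-T\RE(\sqrt{-z})},
\end{equation*}
so the bound is automatic wherever $\RE(\sqrt{-z})$ stays away from $0$, and the only difficulty is where $\RE(\sqrt{-z})$ is small.

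For the far field I record the elementary identity $\RE(\sqrt{\rho e^{i\psi}})=\rho^{1/2}\cos(\psi/2)$; applied to $-z=\abs{z}e^{i\psi}$ with $\abs{\psi}\le\delta_0$ on the sectorial part it yields $\RE(\sqrt{-z})\ge\cos(\delta_0/2)\,\abs{z}^{1/2}$. Hence there is a radius $R=R(\delta_0)>0$ with $T\RE(\sqrt{-z})\ge\log 2$ whenever $\abs{z}\ge R$, and for such $z$ the displayed inequality already gives $\abs{1+e^{-T\sqrt{-z}}}\ge\tfrac12$.

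It remains to treat the compact core $K:=\Pi_{\delta_0,r_0}\cap\{\abs{z}\le R\}$. Note first that although $\sqrt{-z}$ itself jumps across the cut, the quantity $\abs{1+e^{-T\sqrt{-z}}}$ is insensitive to the sign of $\sqrt{-z}$ there (the two choices give complex-conjugate values of $e^{-T\sqrt{-z}}$ when $\RE(\sqrt{-z})=0$), so $z\mapsto\abs{1+e^{-T\sqrt{-z}}}$ is continuous on the compact set $K$. Consequently it suffices to show this modulus has no zero on $K$, for then it attains a strictly positive minimum $m(\delta_0)>0$ there, and the lemma follows with $C(\delta_0)=\min\{\tfrac12,m(\delta_0)\}$. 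To locate the zeros I solve
\begin{equation*}
e^{-T\sqrt{-z}}=-1\quad\Longleftrightarrow\quad T\sqrt{-z}=(2k+1)\pi i,\ \ k\in\ZZ;
\end{equation*}
taking moduli forces $\RE(\sqrt{-z})=0$, so $\sqrt{-z}$ is purely imaginary, i.e. $-z\le 0$, i.e. $z\ge 0$ real, with $T\sqrt{z}=(2k+1)\pi$. The only candidate zeros are therefore the isolated positive reals $z_k=(2k+1)^2\pi^2/T^2$, $k\ge 0$.

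The main obstacle is precisely the exclusion of these $z_k$ from the region, and this is where the fine geometry of $\Pi_{\delta_0,r_0}$ enters. Since the positive real axis meets $\Pi_{\delta_0,r_0}$ only through the disc $\{\abs{z}\le r_0\}$ (the sectorial part hugging the negative real axis avoids it), it suffices to choose $r_0$ small enough that even the smallest candidate lies outside the disc, namely $T\sqrt{r_0}<\pi$, equivalently $r_0<\pi^2/T^2$; then $z_0=\pi^2/T^2>r_0$ and a fortiori every $z_k\notin\Pi_{\delta_0,r_0}$. With all the $z_k$ excluded, $\abs{1+e^{-T\sqrt{-z}}}$ has no zero on $K$, its minimum over $K$ is positive, and combining with the far-field bound completes the proof.
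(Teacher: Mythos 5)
The paper offers no argument of its own here --- its ``proof'' is the citation ``See Lemma 3 in \cite{LMS}'' --- so yours is the only argument on the table, and its skeleton (triangle inequality plus far-field decay of $e^{-T\sqrt{-z}}$, a compactness argument on a bounded core, location of the zeros at $z_k=(2k+1)^2\pi^2/T^2$, and shrinking $r_0$) is the standard one for lemmas of this type. But there is a concrete gap: your geometry contradicts the region as actually defined in \eqref{secteur}. There the sectorial part is $\left\{ z\neq 0:\left\vert \arg (z)\right\vert \leq \delta _{0}\right\}$, a sector hugging the \emph{positive} real axis, whereas you assert that on the sectorial part ``$-z$ remains within angle $\delta_0$ of the positive real axis'' and that the sector hugs the negative real axis; you have silently replaced $\Pi_{\delta_0,r_0}$ by its reflection $\left\{ \left\vert \arg (-z)\right\vert \leq \delta_0\right\} \cup \left\{ \left\vert z\right\vert \leq r_0\right\}$. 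Under the printed definition both of your key steps fail: the far-field bound $\operatorname{Re}\sqrt{-z}\geq \cos (\delta_0/2)\left\vert z\right\vert^{1/2}$ is wrong (for $z>0$ real one has $\operatorname{Re}\sqrt{-z}=0$, and on the rays $\arg z=\pm \delta_0$ only $\operatorname{Re}\sqrt{-z}=\left\vert z\right\vert^{1/2}\sin (\delta_0/2)$ survives), and your exclusion of the zeros collapses, because the entire positive real half-axis lies inside the sectorial part, not merely inside the disc. Indeed, with the printed definition the inequality is simply false in the interior of $\Pi_{\delta_0,r_0}$: at $z_{0}=\pi ^{2}/T^{2}$, which satisfies $\arg z_{0}=0\leq \delta _{0}$, one has $e^{-T\sqrt{-z_{0}}}=e^{\mp i\pi }=-1$, hence $1+e^{-T\sqrt{-z_{0}}}=0$ no matter how small $r_{0}$ is taken. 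So what you prove is a corrected statement, not the printed one.

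Two consistent repairs are available, and you should say which you intend. Either prove the bound only on the contour $\gamma =\partial \Pi_{\delta_0,r_0}$, which is the only place the kernel is ever evaluated: there $\operatorname{Re}\sqrt{-z}=\left\vert z\right\vert^{1/2}\sin (\delta_0/2)$ on the rays and $\operatorname{Re}\sqrt{-z}\geq r_{0}^{1/2}\sin (\delta_0/2)$ on the arc, the zeros $z_{k}$ (which are positive reals) are avoided automatically since $\arg z\neq 0$ on $\gamma$, and the lower bound $1-e^{-Tr_{0}^{1/2}\sin (\delta _{0}/2)}$ follows at once, with no compactness argument needed. Or else flip the sector to the spectral side, i.e.\ adopt the reading you actually used, on which your argument is correct as written --- including the nice observation that the modulus is continuous across the cut --- but then you must also record the extra hypothesis $r_{0}<\pi ^{2}/T^{2}$, which appears nowhere in the statement (the paper only says ``some $r_{0}>0$''); it is harmless, since $r_{0}$ is at the authors' disposal when drawing $\gamma$, but it must be stated, and with it your constant depends on $T$ and $r_{0}$ as well as on $\delta_0$.
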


\begin{proof}
See Lemma 3 in \cite{LMS}.
\end{proof}

\begin{rem}
By using a classical argument of analytic continuation on the resolvent,
the previous assumptions hold true in the sector $\Pi _{\delta
_{0},r_{0}}$, and then, on $\gamma $. Furthermore, we can replace $z$ by $%
z+\lambda .$
\end{rem}

Keeping in mind the constant case (see formula (\ref{ReprConstant})), we
look for a solution of Problem (\ref{EquVariable}) in the following form :%
\begin{equation}
w_{n}(t)=-\frac{1}{2i\pi }\int_{\gamma }\int_{0}^{T}K_{\sqrt{-z}}\left(
t,s\right) (A_{n}\left( t\right) -\lambda -z)^{-1}g_{n}^{\ast }(s)dsdz,
\label{ReprVari}
\end{equation}%
where $g_{n}^{\ast }$ is an unknown function to be determined in some
adequate space in order to obtain a strict solution $w_{n}$ of Problem (\ref%
{EquVariable}), when $g_{n}\in C^{2\theta }\left( \left[ 0,T\right]
,E\right) $.

Our first result concerning the vector valued function $w_{n}(t)$ given by (%
\ref{ReprVari}) is

\begin{prop}
Suppose that $g_{n}^{\ast }\in C^{2\theta }(\left[ 0,T\right] ,E)$, $%
0<2\theta <1$. Then, for all $t\in \left[ 0,T\right] :$%
\begin{equation*}
w_{n}\in C^{2}(\left[ 0,T\right] ,E),
\end{equation*}%
and%
\begin{equation*}
w_{n}(t)\in D\left( A_{n}\left( t\right) \right) .
\end{equation*}
\end{prop}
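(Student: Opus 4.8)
The plan is to analyze the candidate solution
\begin{equation*}
w_{n}(t)=-\frac{1}{2i\pi }\int_{\gamma }\int_{0}^{T}K_{\sqrt{-z}}\left(t,s\right) (A_{n}\left( t\right) -\lambda -z)^{-1}g_{n}^{\ast }(s)\,ds\,dz
\end{equation*}
by establishing the two regularity claims separately. For the $C^{2}$ regularity in $t$, I would first use the explicit form of the Green kernel $K_{\sqrt{-z}}(t,s)$ in \eqref{greenkernel} together with Lemma \ref{LemmeMinDenomP+}, which guarantees that the denominator $1+e^{-T\sqrt{-z}}$ is bounded below on the contour $\gamma$. The key point is that the kernel decays exponentially in $\sqrt{-z}$ along $\gamma$, so that after combining with the resolvent bound \eqref{Hypoth1}, namely $\|(A_n(t)-\lambda-z)^{-1}\|_{L(E)}\leq M/(z+1)$, the integrand is dominated by an integrable function uniformly in $t$. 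This justifies differentiating under the integral sign.

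The main technical work is to show that two $t$-derivatives can be taken and that the result is continuous. Differentiating \eqref{ReprVari} in $t$ produces three types of terms: those where the derivatives hit the kernel $K_{\sqrt{-z}}(t,s)$, those where they hit the resolvent $(A_n(t)-\lambda-z)^{-1}$, and mixed cross terms. For the kernel, I would exploit the fact that $\partial_t K$ and $\partial_t^2 K$ each carry at most one extra factor of $\sqrt{-z}$ relative to $K$, so the exponential decay still dominates the polynomial growth and keeps the $z$-integral convergent. For the resolvent, I would invoke hypotheses \eqref{Hypoth2} and \eqref{Hypothe 3}, which bound $\partial_t(A_n(t)-\lambda-z)^{-1}$ and $\partial_t^2(A_n(t)-\lambda-z)^{-1}$ both by $C/(z+1)$ uniformly; this is precisely what lets the differentiated integrals converge. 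Continuity of $w_n''$ then follows from dominated convergence together with the H\"older regularity of $g_n^\ast$ and the second-derivative H\"older estimate \eqref{Hypothese4}.

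For the membership $w_{n}(t)\in D\left( A_{n}\left( t\right) \right)$, I would apply the operator $(A_n(t)-\lambda-z_0)$ to $w_n(t)$ for a fixed $z_0$ and verify the result lies in $E$. The natural device is the resolvent identity: writing $A_n(t)-\lambda-z_0 = (A_n(t)-\lambda-z)+(z-z_0)$, I can express $(A_n(t)-\lambda)(A_n(t)-\lambda-z)^{-1}$ as $I+z(A_n(t)-\lambda-z)^{-1}$, so that applying the operator inside the integral is legitimate once I check the resulting integral still converges. The extra factor of $z$ from the identity $z(A_n(t)-\lambda-z)^{-1}$ grows, so convergence here relies critically on the exponential decay of $K_{\sqrt{-z}}(t,s)$ beating this linear growth, which it does because $K \sim e^{-\sqrt{-z}\,|t-s|}/\sqrt{-z}$ decays faster than any polynomial in $z$.

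The hardest part will be controlling the boundary contributions and the cross terms when justifying the interchange of $(A_n(t)-\lambda-z)$ with the double integral, since $A_n(t)$ is an unbounded operator acting on a vector-valued integral whose integrand depends on $t$ both through the kernel and through the operator itself. Establishing the closedness argument that lets me pull the closed operator $A_n(t)$ through the Bochner integral — using that the integral of $(A_n(t)-\lambda-z)^{-1}g_n^\ast(s)$ lands in the domain with the expected image — is the delicate step, and it is where the uniformity (in $t$, hence in $n$) emphasized in the Remark becomes essential.
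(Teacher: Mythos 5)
There is a genuine gap, and it sits at the heart of both claims. Your convergence arguments rest on the assertion that the exponential decay of $K_{\sqrt{-z}}(t,s)$ ``beats any polynomial in $z$''. That is true pointwise for $s\neq t$, but false after the integration in $s$, which is what actually matters: near $s=t$ the kernel has magnitude $\sim|z|^{-1/2}$ with no decay at all, and its effective support has width $\sim|z|^{-1/2}$, so $\int_0^T\left\vert K_{\sqrt{-z}}(t,s)\right\vert ds\sim|z|^{-1}$ and nothing better. Consequently, when you pull $A_n(t)-\lambda$ through the integral via $(A_n(t)-\lambda)(A_n(t)-\lambda-z)^{-1}=I+z(A_n(t)-\lambda-z)^{-1}$ (which by \eqref{Hypoth1} is merely \emph{bounded} uniformly in $z$, not decaying), the best bound for the inner integral is $C\Vert g_n^{\ast}\Vert_{\infty}/|z|$, and $\int_{\gamma}|dz|/|z|$ diverges logarithmically at infinity. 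The same obstruction defeats the naive second differentiation in $t$: $\partial_t^2K_{\sqrt{-z}}$ carries an extra factor $\sqrt{-z}$ relative to $K_{\sqrt{-z}}$, so that double integral is again only $O(1/|z|)$. Your outline supplies no additional decay, and tellingly it never uses the hypothesis $g_n^{\ast}\in C^{2\theta}$ at the one place where it is indispensable.

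The paper's proof supplies exactly the missing ingredient: the decomposition $g_n^{\ast}(s)=\bigl(g_n^{\ast}(s)-g_n^{\ast}(t)\bigr)+g_n^{\ast}(t)$. On the first piece the H\"{o}lder seminorm gives $\int_0^T\left\vert K_{\sqrt{-z}}(t,s)\right\vert\,|t-s|^{2\theta}ds\lesssim|z|^{-1-\theta}$, and this extra $|z|^{-\theta}$ is what makes the $z$-integral absolutely convergent after $A_n(t)$ (or two $t$-derivatives) is applied. On the second piece the $s$-integral of the kernel is computed explicitly, leaving $c_{\sqrt{-z}}(t)z^{-1}(A_n(t)-\lambda-z)^{-1}g_n^{\ast}(t)$ plus $z^{-1}(A_n(t)-\lambda-z)^{-1}g_n^{\ast}(t)$, the latter being evaluated by Cauchy's theorem as $-(A_n(t)-\lambda)^{-1}g_n^{\ast}(t)$, which lies in $D(A_n(t))$ by definition of the resolvent. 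A second, related omission: $\partial_tK_{\sqrt{-z}}(t,s)$ has a jump across $s=t$, so the second $t$-derivative of the double integral generates boundary terms in $g_n^{\ast}(t\pm\varepsilon)$ --- this is why the paper, following Tanabe, truncates the $s$-integral to $[0,t-\varepsilon]\cup[t+\varepsilon,T]$ before differentiating and only then lets $\varepsilon\to0$. Differentiating under the integral sign and collecting only kernel, resolvent and cross terms, as you propose, misses precisely the contributions that reproduce $g_n^{\ast}(t)$ in the equation.
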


\begin{proof}
First, observe that the vector valued function $w_{n}$ is well defined. In
fact, using a direct computation on the kernel (\ref{greenkernel}), one has 
\begin{eqnarray*}
&&\left \Vert \int_{0}^{T}K_{\sqrt{-z}}(t,s)(A_{n}(t)-\lambda
-z)^{-1}g_{n}^{\ast }(s)ds\right \Vert  \\
&\leqslant &\left( \underset{t\in \lbrack 0,T]}{\sup }\int_{0}^{T}\left \vert
K_{\sqrt{-z}}(t,s)\right \vert \left \Vert (A_{n}(t)-\lambda
-z)^{-1}\right \Vert _{L\left( E\right) }ds\right) \left \Vert g_{n}^{\ast
}\right \Vert _{C(\left[ 0,T\right] ;E)} \\
&\leqslant &\frac{C}{\left \vert z\right \vert }\left \Vert g_{n}^{\ast
}\right \Vert _{C(\left[ 0,T\right] ;E)}.
\end{eqnarray*}%
Now, we write $w_{n}$ as follows :%
\begin{eqnarray*}
w_{n}(t) &=&-\frac{1}{2i\pi }\int_{\gamma }\int_{0}^{T}K_{\sqrt{-z}%
}(t,s)(A_{n}(t)-\lambda -z)^{-1}\left( g_{n}^{\ast }(s)-g_{n}^{\ast
}(t)\right) dsdz \\
&&-\frac{1}{2i\pi }\int_{\gamma }\int_{0}^{T}K_{\sqrt{-z}}(t,s)(A_{n}(t)-%
\lambda -z)^{-1}g_{n}^{\ast }(t)dsdz,
\end{eqnarray*}%
which becomes :%
\begin{eqnarray*}
w_{n}(t) &=&-\frac{1}{2i\pi }\int_{\gamma }\int_{0}^{T}K_{\sqrt{-z}%
}(t,s)(A_{n}(t)-\lambda -z)^{-1}\left( g_{n}^{\ast }(s)-g_{n}^{\ast
}(t)\right) dsdz \\
&&-\frac{1}{2i\pi }\int_{\gamma }c_{\sqrt{-z}}(t)\frac{(A_{n}(t)-\lambda
-z)^{-1}}{z}g_{n}^{\ast }(t)dz \\
&&-\frac{1}{2i\pi }\int_{\gamma }\frac{(A_{n}(t)-\lambda -z)^{-1}}{z}%
g_{n}^{\ast }(t)dz,
\end{eqnarray*}%
where 
\begin{equation*}
c_{\sqrt{-z}}(t)=\frac{e^{-\sqrt{-z}t}+e^{-\sqrt{-z}\left( T-t\right) }}{%
\left( 1+e^{-T\sqrt{-z}}\right) }.
\end{equation*}%
Thanks to (\ref{Hypoth1}), we have%
\begin{align*}
& \left \Vert \frac{1}{2i\pi }\int_{\gamma }\int_{0}^{T}K_{\sqrt{-z}%
}(t,s)(A_{n}(t)-\lambda -z)^{-1}\left( g_{n}^{\ast }(s)-g_{n}^{\ast
}(t)\right) dsdz\right \Vert  \\
& \leqslant \int_{\gamma }\frac{C}{\left \vert z\right \vert }%
\int_{0}^{T}\left \vert K_{\sqrt{-z}}(t,s)\right \vert \left \vert
t-s\right \vert ^{2\theta }\left \Vert g_{n}^{\ast }\right \Vert _{C^{2\theta }(%
\left[ 0,T\right] ;E)}ds\left \vert dz\right \vert  \\
& \leqslant C\int_{\gamma }\frac{\left \vert dz\right \vert }{\left \vert
z\right \vert ^{2+\theta }}\left \Vert g_{n}^{\ast }\right \Vert _{C^{2\theta }(%
\left[ 0,T\right] ;E)} \\
& \leqslant C\left \Vert g_{n}^{\ast }\right \Vert _{C^{2\theta }(\left[ 0,T%
\right] ;E)}.
\end{align*}%
Concerning the second integral, we have
\begin{equation*}
\left \Vert \frac{1}{2i\pi }\int_{\gamma }c_{\sqrt{-z}}(t)\frac{%
(A_{n}(t)-\lambda )^{-1}(A_{n}(t)-\lambda -z)^{-1}}{z}g_{n}^{\ast
}(t)dz\right \Vert \leqslant C\left \Vert g_{n}^{\ast }\right \Vert _{C(\left[
0,T\right] ;E)}.
\end{equation*}%
On the other hand, by Cauchy Theorem, we deduce that%
\begin{equation*}
\frac{1}{2i\pi }\int_{\gamma }\frac{(A_{n}(t)-\lambda -z)^{-1}}{z}%
g_{n}^{\ast }(t)dz=-(A_{n}(t)-\lambda )^{-1}g_{n}^{\ast }(t).
\end{equation*}%
Summing up, we deduce that 
\begin{equation*}
\forall t\in \left[ 0,T\right] ,\text{ }w_{n}(t)\in D(A_{n}(t))
\end{equation*}%
and 
\begin{align*}
& (A_{n}(t)-\lambda )^{-1}w_{n}(t) \\
& =-\frac{1}{2i\pi }\int_{\gamma }\int_{0}^{\delta }K_{\sqrt{-z}%
}(t,s)(A_{n}(t)-\lambda )^{-1}(A_{n}(t)-\lambda -z)^{-1}\left( g_{n}^{\ast
}(s)-g_{n}^{\ast }(t)\right) dsdz \\
& -\frac{1}{2i\pi }\int_{\gamma }c_{\sqrt{-z}}(t)\frac{(A_{n}(t)-\lambda
)^{-1}(A_{n}(t)-\lambda -z)^{-1}}{z}g_{n}^{\ast }(t)dz \\
& -g_{n}^{\ast }(t).
\end{align*}
\end{proof}

\begin{prop}
Suppose that $g_{n}^{\ast }\in C^{2\theta }(\left[ 0,T\right] ,E)$, $%
0<2\theta <1$. Then, the abstract equation%
\begin{equation*}
w_{n}^{\prime \prime }(t)+A_{n}\left( t\right) w_{n}(t)-\lambda
w_{n}(t)=g_{n}^{\ast }(t)-R_{\lambda }(g_{n}^{\ast })(t)
\end{equation*}%
is satisfied, where%
\begin{eqnarray*}
R_{\lambda }(g_{n}^{\ast })(t) &=&+\frac{1}{i\pi }\int_{\gamma }\int_{0}^{T}%
\frac{\partial }{\partial t}K_{\sqrt{-z}}(t,s)\frac{\partial }{\partial t}%
(A_{n}\left( t\right) -\lambda -z)^{-1}g_{n}^{\ast }(s)dsdz \\
&&-\frac{1}{2i\pi }\int_{\gamma }\int_{0}^{T}K_{\sqrt{-z}}(t,s)\frac{%
\partial ^{2}}{\partial t^{2}}(A_{n}\left( t\right) -\lambda
-z)^{-1}g_{n}^{\ast }(s)dsdz.
\end{eqnarray*}
\end{prop}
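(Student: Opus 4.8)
The plan is to insert the Ansatz (\ref{ReprVari}) into the left-hand side of the equation and to exploit the fact that the Green kernel (\ref{greenkernel}) was manufactured so that, with the operator coefficient frozen, the representation (\ref{ReprConstant}) is an \emph{exact} solution. Concretely, I would use two structural properties of $K_{\sqrt{-z}}$: away from the diagonal it solves the scalar homogeneous equation in $t$, namely $\partial_t^2 K_{\sqrt{-z}}(t,s) = -z\,K_{\sqrt{-z}}(t,s)$ for $t\neq s$; and $K_{\sqrt{-z}}(\cdot,s)$ is continuous at $s=t$ while its $t$-derivative has a unit jump across the diagonal. These are exactly the features that, in the constant-coefficient computation, regenerate the forcing term, so the remainder should collect precisely those terms in which a $t$-derivative lands on the $t$-dependent resolvent.

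First I would differentiate $w_n$ twice in $t$. Since the kernel is defined piecewise, I split $\int_0^T = \int_0^t + \int_t^T$ and apply Leibniz' rule. In the first derivative the boundary contributions cancel because $K_{\sqrt{-z}}(t,s)$ is continuous across $s=t$, leaving $w_n'(t) = -\frac{1}{2i\pi}\int_\gamma\int_0^T\bigl(\partial_t K_{\sqrt{-z}}(t,s)\,R(t,z) + K_{\sqrt{-z}}(t,s)\,\partial_t R(t,z)\bigr)g_n^{\ast}(s)\,ds\,dz$, where $R(t,z):=(A_n(t)-\lambda-z)^{-1}$. Differentiating once more, the jump of $\partial_t K_{\sqrt{-z}}$ across the diagonal now survives and produces a boundary term proportional to $R(t,z)\,g_n^{\ast}(t)$, while the interior integral contributes $\partial_t^2 K\,R + 2\,\partial_t K\,\partial_t R + K\,\partial_t^2 R$.

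Next I would substitute into $w_n''+A_n(t)w_n-\lambda w_n$. Writing $(A_n(t)-\lambda)R(t,z) = I + z\,R(t,z)$ and replacing $\partial_t^2 K$ by $-z\,K$, the two contributions carrying the factor $z\,K\,R$ cancel exactly between $w_n''$ and $(A_n(t)-\lambda)w_n$. The surviving boundary term together with the remaining $\int_0^T K_{\sqrt{-z}}\,g_n^{\ast}\,ds$ contribution is handled by the now-familiar decomposition $g_n^{\ast}(s)=g_n^{\ast}(t)+(g_n^{\ast}(s)-g_n^{\ast}(t))$ and Cauchy's theorem, using Lemma \ref{LemmeMinDenomP+} to guarantee that the kernel is free of poles on and inside $\gamma$ and picking up the residue at $z=0$ exactly as in the preceding proposition; this piece collapses to the forcing term $g_n^{\ast}(t)$. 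What is then left over is precisely the pair of terms in which a $t$-derivative has fallen on the resolvent, those with $\partial_t K\,\partial_t R$ and with $K\,\partial_t^2 R$, and they assemble into $-R_\lambda(g_n^{\ast})(t)$, which is the assertion.

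The main obstacle is not the algebra of these cancellations but the analytic bookkeeping that legitimizes it, and I expect two delicate points. The first is the justification of differentiation under the double integral and of the interchange of $\int_\gamma$ with $\int_0^T$ near the diagonal $s=t$; this rests on the uniform resolvent estimates (\ref{Hypoth1})--(\ref{Hypothese4}), on the pointwise bounds for $K_{\sqrt{-z}}$ and $\partial_t K_{\sqrt{-z}}$ along $\gamma$, and crucially on the $C^{2\theta}$ regularity of $g_n^{\ast}$, which tames the singular behaviour of $\partial_t^2 K$ at $s=t$. The second is the careful extraction of $g_n^{\ast}(t)$ from the boundary/jump contribution, where the naive contour integrals are only conditionally convergent and must first be regularized through the Hölder splitting before Cauchy's theorem is applied. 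Once these two points are secured, identifying the surviving terms as $R_\lambda(g_n^{\ast})$ is immediate.
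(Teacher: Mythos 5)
Your proposal is correct and follows essentially the same route as the paper: both differentiate the representation (\ref{ReprVari}) twice in $t$, exploit the relation $\partial_t^2 K_{\sqrt{-z}}=-zK_{\sqrt{-z}}$ off the diagonal together with the jump of $\partial_t K_{\sqrt{-z}}$ at $s=t$ and the resolvent identity, and identify the surviving terms in which $t$-derivatives fall on $(A_n(t)-\lambda-z)^{-1}$ as $R_{\lambda}(g_n^{\ast})$. The diagonal regularization you single out as the delicate point is exactly what the paper carries out, following Tanabe, by truncating the inner integral to $[0,t-\varepsilon]\cup[t+\varepsilon,T]$ and passing to the limit $\varepsilon\to 0$ by dominated convergence under (\ref{Hypoth1})--(\ref{Hypothese4}).
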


\begin{proof}
\textbf{Step 1. }First, regarding the derivative $w_{n}^{\prime }(t)$, we
have :%
\begin{eqnarray*}
w_{n}^{\prime }(t) &=&\frac{1}{2i\pi }\int_{\gamma }\int_{0}^{t}\frac{e^{-%
\sqrt{-z}\left( t-s\right) }+e^{-\sqrt{-z}\left( T-t+s\right) }}{2\left(
1+e^{-T\sqrt{-z}}\right) }(A_{n}(t)-\lambda -z)^{-1}g_{n}^{\ast }(s)dsdz \\
&&-\frac{1}{2i\pi }\int_{\gamma }\int_{t}^{T}\frac{e^{-\sqrt{-z}\left(
s-t\right) }+e^{-\sqrt{-z}\left( T+t-s\right) }}{2\left( 1+e^{-T\sqrt{-z}%
}\right) }(A_{n}(t)-\lambda -z)^{-1}g_{n}^{\ast }(s)dsdz \\
&&-\frac{1}{2i\pi }\int_{\gamma }\int_{0}^{t}\frac{e^{-\sqrt{-z}\left(
t-s\right) }-e^{-\sqrt{-z}\left( T-t+s\right) }}{2\sqrt{-z}\left( 1+e^{-T%
\sqrt{-z}}\right) }\frac{\partial }{\partial t}(A_{n}(t)-\lambda
-z)^{-1}g_{n}^{\ast }(s)dsdz \\
&&-\frac{1}{2i\pi }\int_{\gamma }\int_{t}^{T}\frac{e^{-\sqrt{-z}\left(
s-t\right) }-e^{-\sqrt{-z}\left( T+t-s\right) }}{2\sqrt{-z}\left( 1+e^{-T%
\sqrt{-z}}\right) }\frac{\partial }{\partial t}(A_{n}(t)-\lambda
-z)^{-1}g_{n}^{\ast }(s)dsdz.
\end{eqnarray*}

\textbf{Step 2.} Let us study the second derivative $w_{n}^{\prime \prime
}(t)$. We follow the approach used in \cite{Tanabe}. Let $\varepsilon $ be a
very small positive number and $t$ be such that 
\begin{equation*}
0<\varepsilon \leqslant t\leqslant T-\varepsilon <T,
\end{equation*}%
and $w_{n,\varepsilon }^{\prime }$ be the function defined by%
\begin{eqnarray*}
w_{n,\varepsilon }^{\prime }(t) &=&\frac{1}{2i\pi }\int_{\gamma
}\int_{0}^{t-\varepsilon }\frac{e^{-\sqrt{-z}\left( t-s\right) }+e^{-\sqrt{-z%
}\left( T-t+s\right) }}{2\left( 1+e^{-T\sqrt{-z}}\right) }(A_{n}(t)-\lambda
-z)^{-1}g_{n}^{\ast }(s)dsdz \\
&&-\frac{1}{2i\pi }\int_{\gamma }\int_{t+\varepsilon }^{T}\frac{e^{-\sqrt{-z}%
\left( s-t\right) }+e^{-\sqrt{-z}\left( T+t-s\right) }}{2\left( 1+e^{-T\sqrt{%
-z}}\right) }(A_{n}(t)-\lambda -z)^{-1}g_{n}^{\ast }(s)dsdz \\
&&-\frac{1}{2i\pi }\int_{\gamma }\int_{0}^{t-\varepsilon }\frac{e^{-\sqrt{-z}%
\left( t-s\right) }-e^{-\sqrt{-z}\left( T-t+s\right) }}{2\sqrt{-z}\left(
1+e^{-T\sqrt{-z}}\right) }\frac{\partial }{\partial t}(A_{n}(t)-\lambda
-z)^{-1}g_{n}^{\ast }(s)dsdz \\
&&-\frac{1}{2i\pi }\int_{\gamma }\int_{t+\varepsilon }^{T}\frac{e^{-\sqrt{-z}%
\left( s-t\right) }-e^{-\sqrt{-z}\left( T+t-s\right) }}{2\sqrt{-z}\left(
1+e^{-T\sqrt{-z}}\right) }\frac{\partial }{\partial t}(A_{n}(t)-\lambda
-z)^{-1}g_{n}^{\ast }(s)dsdz.
\end{eqnarray*}%
Observe here that all these integrals are absolutely convergent and 
\begin{equation*}
w_{n,\varepsilon }^{\prime }(t)\rightarrow w_{n}^{\prime }(t),
\end{equation*}%
strongly as $\varepsilon \rightarrow 0$. In addition, we have%
\begin{equation*}
w_{n,\varepsilon }^{\prime \prime }(t):=\Pi _{n,\varepsilon }^{1}\left(
t\right) +\Pi _{n,\varepsilon }^{2}\left( t\right) +\Pi _{n,\varepsilon
}^{3}\left( t\right) +\Pi _{n,\varepsilon }^{4}\left( t\right) ,
\end{equation*}%
where%
\begin{eqnarray*}
\Pi _{n,\varepsilon }^{1}\left( t\right)  &=&-\frac{1}{2i\pi }\int_{\gamma
}\int_{0}^{t-\varepsilon }\sqrt{-z}\frac{e^{-\sqrt{-z}\left( t-s\right)
}-e^{-\sqrt{-z}\left( T-t+s\right) }}{2\left( 1+e^{-T\sqrt{-z}}\right) }%
(A_{n}(t)-\lambda -z)^{-1}g_{n}^{\ast }(s)dsdz \\
&&-\frac{1}{2i\pi }\int_{\gamma }\int_{t+\varepsilon }^{T}\sqrt{-z}\frac{e^{-%
\sqrt{-z}\left( s-t\right) }-e^{-\sqrt{-z}\left( T+t-s\right) }}{2\left(
1+e^{-T\sqrt{-z}}\right) }(A_{n}(t)-\lambda -z)^{-1}g_{n}^{\ast }(s)dsdz,
\end{eqnarray*}%
\begin{eqnarray*}
\Pi _{n,\varepsilon }^{2}\left( t\right)  &=&+\frac{1}{2i\pi }\int_{\gamma }%
\frac{e^{-\sqrt{-z}\varepsilon }+e^{-\sqrt{-z}\left( T-\varepsilon \right) }%
}{2\left( 1+e^{-T\sqrt{-z}}\right) }(A_{n}(t)-\lambda -z)^{-1}g_{n}^{\ast
}(t-\varepsilon )dz \\
&&+\frac{1}{2i\pi }\int_{\gamma }\frac{e^{-\sqrt{-z}\varepsilon }+e^{-\sqrt{%
-z}\left( T-\varepsilon \right) }}{2\left( 1+e^{-T\sqrt{-z}}\right) }%
(A_{n}(t)-\lambda -z)^{-1}g_{n}^{\ast }(t+\varepsilon )dz,
\end{eqnarray*}%
\begin{eqnarray*}
\Pi _{n,\varepsilon }^{3}\left( t\right)  &=&-\frac{1}{2i\pi }\int_{\gamma }%
\frac{e^{-\sqrt{-z}\varepsilon }-e^{-\sqrt{-z}\left( T-\varepsilon \right) }%
}{2\sqrt{-z}\left( 1+e^{-T\sqrt{-z}}\right) }\frac{\partial }{\partial t}%
(A_{n}(t)-\lambda -z)^{-1}g_{n}^{\ast }(t-\varepsilon )dsdz \\
&&+\frac{1}{2i\pi }\int_{\gamma }\frac{e^{-\sqrt{-z}\varepsilon }-e^{-\sqrt{%
-z}\left( T-\varepsilon \right) }}{2\sqrt{-z}\left( 1+e^{-T\sqrt{-z}}\right) 
}\frac{\partial }{\partial t}(A_{n}(t)-\lambda -z)^{-1}g_{n}^{\ast
}(t+\varepsilon )dz,
\end{eqnarray*}%
\begin{eqnarray*}
\Pi _{n,\varepsilon }^{4}\left( t\right)  &=&\frac{1}{2i\pi }\int_{\gamma
}\int_{0}^{t-\varepsilon }\frac{e^{-\sqrt{-z}\left( t-s\right) }+e^{-\sqrt{-z%
}\left( T-t+s\right) }}{2\left( 1+e^{-T\sqrt{-z}}\right) }\frac{\partial }{%
\partial t}(A_{n}(t)-\lambda -z)^{-1}g_{n}^{\ast }(s)dsdz \\
&&-\frac{1}{2i\pi }\int_{\gamma }\int_{t+\varepsilon }^{T}\frac{e^{-\sqrt{-z}%
\left( s-t\right) }+e^{-\sqrt{-z}\left( T+t-s\right) }}{2\left( 1+e^{-T\sqrt{%
-z}}\right) }\frac{\partial }{\partial t}(A_{n}(t)-\lambda
-z)^{-1}g_{n}^{\ast }(s)dsdz \\
&&+\frac{1}{2i\pi }\int_{\gamma }\int_{0}^{t-\varepsilon }\frac{e^{-\sqrt{-z}%
\left( t-s\right) }+e^{-\sqrt{-z}\left( T-t+s\right) }}{2\left( 1+e^{-T\sqrt{%
-z}}\right) }\frac{\partial }{\partial t}(A_{n}(t)-\lambda
-z)^{-1}g_{n}^{\ast }(s)dsdz \\
&&-\frac{1}{2i\pi }\int_{\gamma }\int_{t+\varepsilon }^{T}\frac{e^{-\sqrt{-z}%
\left( s-t\right) }+e^{-\sqrt{-z}\left( T+t-s\right) }}{2\left( 1+e^{-T\sqrt{%
-z}}\right) }\frac{\partial }{\partial t}(A_{n}(t)-\lambda
-z)^{-1}g_{n}^{\ast }(s)dsdz,
\end{eqnarray*}%
\begin{align*}
& \Pi _{n,\varepsilon }^{5}\left( t\right) \\ =&-\frac{1}{2i\pi }\int_{\gamma
}\int_{0}^{t-\varepsilon }\frac{e^{-\sqrt{-z}\left( t-s\right) }-e^{-\sqrt{-z%
}\left( T-t+s\right) }}{2\sqrt{-z}\left( 1+e^{-T\sqrt{-z}}\right) }\frac{%
\partial ^{2}}{\partial t^{2}}(A_{n}(t)-\lambda -z)^{-1}g_{n}^{\ast }(s)dsdz
\\
&-\frac{1}{2i\pi }\int_{\gamma }\int_{t+\varepsilon }^{T}\frac{e^{-\sqrt{-z}%
\left( s-t\right) }-e^{-\sqrt{-z}\left( T+t-s\right) }}{2\sqrt{-z}\left(
1+e^{-T\sqrt{-z}}\right) }\frac{\partial ^{2}}{\partial t^{2}}%
(A_{n}(t)-\lambda -z)^{-1}g_{n}^{\ast }(s)dsdz.
\end{align*}%
Taking into account all properties (\ref{Hypoth1}) to (\ref{Hypothese4}), it
is easy to see all these integrals are absolutely convergent and can be
treated similarly using the Lebesgue dominated convergence theorem. Then, we
obtain the strong convergence 
\begin{equation*}
w_{n,\varepsilon }^{\prime }(t)\rightarrow w_{n}^{\prime }(t)\text{ \ and }%
w_{n,\varepsilon }^{\prime \prime }(t)\rightarrow -(A_{n}(t)-\lambda
-z)^{-1}w_{n}(t)+R_{\lambda }(g_{n}^{\ast })(t)+g_{n}^{\ast }(t),
\end{equation*}%
as $\varepsilon \rightarrow 0$. Hence%
\begin{equation*}
w_{n}^{\prime \prime }(t)=-(A_{n}(t)-\lambda -z)^{-1}w_{n}(t)+R_{\lambda
}(g_{n}^{\ast })(t)+g_{n}^{\ast }(t),
\end{equation*}%
or%
\begin{equation*}
w_{n}^{\prime \prime }(t)+(A_{n}(t)-\lambda -z)^{-1}w_{n}(t)=g_{n}^{\ast
}(t)+R_{\lambda }(g_{n}^{\ast })(t).
\end{equation*}
\end{proof}

The relationship between the vectorial functions $g_{n}$ and $g_{n}^{\ast }$
is given by the following

\begin{prop}
Suppose that $g_{n}^{\ast }\in L^{\infty }(\left[ 0,T\right] ;E)$. Then,
there exists $\lambda ^{\ast }>0$\ such that for all $\lambda \geq \lambda
^{\ast }$, the equation%
\begin{equation*}
g_{n}\left( t\right) =g_{n}^{\ast }(t)-R_{\lambda }(g_{n}^{\ast })(t),
\end{equation*}%
admits a unique solution 
\begin{equation*}
g_{n}^{\ast }\in L^{\infty }(\left[ 0,T\right] ;E).
\end{equation*}
\end{prop}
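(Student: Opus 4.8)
The plan is to read the stated identity $g_{n}(t)=g_{n}^{\ast}(t)-R_{\lambda}(g_{n}^{\ast})(t)$ as the operator equation $(I-R_{\lambda})g_{n}^{\ast}=g_{n}$ posed in the Banach space $X:=L^{\infty}(\left[0,T\right];E)$, and to show that for $\lambda$ large enough the operator $I-R_{\lambda}$ is boundedly invertible on $X$. Its inverse applied to $g_{n}$ then yields simultaneously the existence and the uniqueness of a solution $g_{n}^{\ast}\in X$. Thus the whole statement reduces to a contraction/Neumann-series argument, the crux of which is a quantitative bound $\left\Vert R_{\lambda}\right\Vert_{L(X)}\to 0$ as $\lambda\to+\infty$.

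First I would verify that $R_{\lambda}$ maps $X$ into itself and estimate its norm. Splitting $R_{\lambda}=R_{\lambda}^{(1)}+R_{\lambda}^{(2)}$ according to the two integrals in its definition, I bound the two factors in each integrand separately. On the one hand, Lemma \ref{LemmeMinDenomP+} bounds the denominator $\left\vert 1+e^{-T\sqrt{-z}}\right\vert$ from below on $\gamma$, so a direct computation on the kernel (\ref{greenkernel}) gives, uniformly in $t\in\left[0,T\right]$, the estimates $\int_{0}^{T}\left\vert K_{\sqrt{-z}}(t,s)\right\vert\,ds\leqslant C/\left\vert z\right\vert$ and $\int_{0}^{T}\left\vert \partial_{t}K_{\sqrt{-z}}(t,s)\right\vert\,ds\leqslant C/\left\vert z\right\vert^{1/2}$. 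On the other hand, the shifted forms of (\ref{Hypoth2}) and (\ref{Hypothe 3}) (recall the Remark permitting the replacement of $z$ by $z+\lambda$) yield $\left\Vert \partial_{t}(A_{n}(t)-\lambda-z)^{-1}\right\Vert_{L(E)}\leqslant C/(\left\vert z\right\vert+\lambda)$ and the same bound for the second $t$-derivative of the resolvent. Since these estimates are uniform in $t$ and, by the Remark, in $n$, the map $R_{\lambda}$ sends $X$ into $X$.

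Combining the kernel bounds with the resolvent bounds term by term reduces the norm of $R_{\lambda}$ to contour integrals of the type $\int_{\gamma}\frac{\left\vert dz\right\vert}{\left\vert z\right\vert^{1/2}(\left\vert z\right\vert+\lambda)}$ (from $R_{\lambda}^{(1)}$) and $\int_{\gamma}\frac{\left\vert dz\right\vert}{\left\vert z\right\vert(\left\vert z\right\vert+\lambda)}$ (from $R_{\lambda}^{(2)}$). Parametrizing $\gamma$ by $\left\vert z\right\vert=r$ and rescaling $r=\lambda\rho$ shows that the first integral is $O(\lambda^{-1/2})$ and the second is $O(\lambda^{-1}\log\lambda)$, the contribution of the small arc $\left\vert z\right\vert\leqslant r_{0}$ being even smaller. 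Hence there is a constant $C(\delta_{0})$, independent of $t$ and $n$, with $\left\Vert R_{\lambda}\right\Vert_{L(X)}\leqslant C(\delta_{0})\,\lambda^{-1/2}$ for $\lambda$ large, so $\left\Vert R_{\lambda}\right\Vert_{L(X)}\to 0$ as $\lambda\to+\infty$. I would then fix $\lambda^{\ast}>0$ such that $\left\Vert R_{\lambda}\right\Vert_{L(X)}\leqslant 1/2<1$ for every $\lambda\geqslant\lambda^{\ast}$; the operator $I-R_{\lambda}$ is invertible on $X$ through the Neumann series $\sum_{k\geqslant 0}R_{\lambda}^{k}$, and $g_{n}^{\ast}:=(I-R_{\lambda})^{-1}g_{n}=\sum_{k\geqslant 0}R_{\lambda}^{k}g_{n}$ is the unique element of $X$ solving the equation.

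The main obstacle is precisely the norm estimate of $R_{\lambda}$: one must carry out the kernel computations for both $K_{\sqrt{-z}}$ and $\partial_{t}K_{\sqrt{-z}}$ so that the bounds are genuinely uniform in $t$ and $n$, and then perform the contour integration over $\gamma$ carefully enough to extract a strictly negative power of $\lambda$. The rescaling step that converts the $\lambda$-dependence into $\lambda^{-1/2}$ (together with checking convergence of the resulting $\rho$-integral at both $0$ and $\infty$) is the delicate point; everything else is a routine Neumann-series conclusion.
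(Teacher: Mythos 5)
Your proposal is correct and follows essentially the same route as the paper: bound $\left\Vert R_{\lambda}\right\Vert_{L(L^{\infty}([0,T];E))}$ by pairing the kernel estimates on $\gamma$ with the shifted resolvent estimates (\ref{Hypoth2})--(\ref{Hypothe 3}), obtain a bound tending to $0$ as $\lambda\to+\infty$, and invert $I-R_{\lambda}$ via the Neumann series (which the paper leaves implicit in ``choose $\lambda^{\ast}$ so that $\left\Vert R_{\lambda}\right\Vert<1$''). The only discrepancy is in the bookkeeping of exponents --- the paper asserts a decay of order $C/\lambda$ where your more careful contour computation gives $C\lambda^{-1/2}$ --- but either rate suffices for the conclusion.
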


\begin{proof}
Recall that 
\begin{eqnarray*}
R_{\lambda }(g_{n}^{\ast })(t) &=&+\frac{1}{i\pi }\int_{\gamma }\int_{0}^{T}%
\frac{\partial }{\partial t}K_{\sqrt{-z}}(t,s)\frac{\partial }{\partial t}%
(A_{n}\left( t\right) -\lambda -z)^{-1}g_{n}^{\ast }(s)dsdz \\
&&-\frac{1}{2i\pi }\int_{\gamma }\int_{0}^{T}K_{\sqrt{-z}}(t,s)\frac{%
\partial ^{2}}{\partial t^{2}}(A_{n}\left( t\right) -\lambda
-z)^{-1}g_{n}^{\ast }(s)dsdz.
\end{eqnarray*}%
Then, thanks to (\ref{Hypoth2})-(\ref{Hypothese4}), we see that 
\begin{align*}
& \left \Vert \int_{\gamma }\int_{0}^{T}K_{\sqrt{-z}}(t,s)\frac{\partial ^{2}%
}{\partial t^{2}}(A_{n}\left( t\right) -\lambda -z)^{-1}g_{n}^{\ast
}(s)dsdz\right \Vert \\
& \leqslant C\int_{\gamma }\frac{1}{\left \vert z\right \vert ^{^{3/2}}\left
\vert z+\lambda \right \vert ^{1/2}}\left \vert dz\right \vert \left \Vert
g_{n}^{\ast }\right \Vert _{C(E)}\leqslant \left( C/\lambda ^{\alpha
}\right) \left \Vert g_{n}^{\ast }\right \Vert _{_{L^{\infty }([0,T;E))}},
\end{align*}%
and%
\begin{align*}
& \left \Vert \frac{1}{2i\pi }\int_{\gamma }\int_{0}^{T}\dfrac{\partial }{%
\partial t}K_{\sqrt{-z}}(t,s)\frac{\partial }{\partial t}(A_{n}\left(
t\right) -\lambda -z)^{-1}g_{n}^{\ast }(s)dsdz\right \Vert \\
& \leqslant C\int_{\gamma }\frac{1}{\left \vert z\right \vert ^{^{1/2}}\left
\vert z+\lambda \right \vert ^{^{1/2+1/2}}}\left \vert dz\right \vert \left
\Vert g_{n}^{\ast }\right \Vert _{C(E)}\leqslant \left( C/\lambda
^{1/2+1/2}\right) \left \Vert g_{n}^{\ast }\right \Vert _{_{L^{\infty
}([0,T;E))}}.
\end{align*}%
This implies 
\begin{equation}
\left \Vert R_{\lambda }\right \Vert _{L(L^{\infty }([0,T];E))}=C/\lambda .
\label{estimation on R}
\end{equation}%
Now, to establish the result it suffices to choose $\lambda ^{\ast }>0$ such
that for $\lambda \geq \lambda ^{\ast }$ 
\begin{equation*}
\left \Vert R_{\lambda }\right \Vert _{L(L^{\infty }([0,T];E))}<1.
\end{equation*}
\end{proof}

The following proposition is concerned with the regularity of the operator $%
R_{\lambda }$ needed in order to study the optimal regularity of the
solution we are looking for:

\begin{prop}
Let $g_{n}\in C^{2\theta }([0,T];E).$ Then, there exists $\lambda ^{\ast }>0$
such that for all $\lambda \geqslant \lambda ^{\ast }$,%
\begin{equation*}
R_{\lambda }(g_{n}^{\ast })\left( t\right) \in C^{2\theta }([0,T];E).
\end{equation*}
\end{prop}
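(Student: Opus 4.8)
The plan is to prove that $R_{\lambda}$ is bounded from $L^{\infty}([0,T];E)$ into $C^{2\theta}([0,T];E)$. Since the preceding proposition guarantees, for $\lambda$ beyond some $\lambda^{\ast}>0$, the existence of $g_{n}^{\ast}\in L^{\infty}([0,T];E)$ solving $g_{n}=g_{n}^{\ast}-R_{\lambda}(g_{n}^{\ast})$, such a mapping property yields $R_{\lambda}(g_{n}^{\ast})\in C^{2\theta}([0,T];E)$ at once. The sup-norm bound is already contained in \eqref{estimation on R}, so the whole matter reduces to controlling the H\"older seminorm, i.e. to producing a constant $C>0$ (independent of $n$) with
\[
\left\Vert R_{\lambda}(g_{n}^{\ast})(t)-R_{\lambda}(g_{n}^{\ast})(t')\right\Vert_{E}\leqslant C\,\vert t-t'\vert^{2\theta}\,\Vert g_{n}^{\ast}\Vert_{L^{\infty}([0,T];E)},\qquad t,t'\in[0,T].
\]
The point to emphasize is that the H\"older regularity of the output is manufactured entirely by the smoothness in $t$ of the scalar kernel $K_{\sqrt{-z}}(t,s)$ and of the operator family $t\mapsto(A_{n}(t)-\lambda-z)^{-1}$; no regularity of $g_{n}^{\ast}$ beyond boundedness is needed, because $g_{n}^{\ast}$ only enters under the $s$-integral.

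First I would split $R_{\lambda}=R_{\lambda}^{(1)}+R_{\lambda}^{(2)}$ according to the two integrals defining it, the first carrying $\partial_{t}K_{\sqrt{-z}}$ and $\partial_{t}(A_{n}(t)-\lambda-z)^{-1}$, the second carrying $K_{\sqrt{-z}}$ and $\partial_{t}^{2}(A_{n}(t)-\lambda-z)^{-1}$. For each piece I would form the increment from $t$ to $t'$ and perform the standard add-and-subtract decomposition that separates the two $t$-dependencies: abbreviating $B_{z}(t):=(A_{n}(t)-\lambda-z)^{-1}$,
\[
K_{\sqrt{-z}}(t,s)\,\partial_{t}^{2}B_{z}(t)-K_{\sqrt{-z}}(t',s)\,\partial_{t}^{2}B_{z}(t')
=\bigl[K_{\sqrt{-z}}(t,s)-K_{\sqrt{-z}}(t',s)\bigr]\partial_{t}^{2}B_{z}(t)
+K_{\sqrt{-z}}(t',s)\bigl[\partial_{t}^{2}B_{z}(t)-\partial_{t}^{2}B_{z}(t')\bigr],
\]
and analogously for $R_{\lambda}^{(1)}$. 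The kernel factors are estimated directly from \eqref{greenkernel}, using Lemma \ref{LemmeMinDenomP+} to keep the denominator $1+e^{-T\sqrt{-z}}$ bounded away from zero along $\gamma$; this produces the size bounds $\int_{0}^{T}\vert K_{\sqrt{-z}}(t,s)\vert\,ds\leqslant C/\vert z\vert$ and $\int_{0}^{T}\vert\partial_{t}K_{\sqrt{-z}}(t,s)\vert\,ds\leqslant C/\vert z\vert^{1/2}$ (the exponential decay $e^{-\RE\sqrt{-z}\,\vert t-s\vert}$ being responsible for the correct powers of $\vert z\vert$), together with the Lipschitz-in-$t$ estimates for $K$ and $\partial_{t}K$. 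Since $2\theta<1$ and $[0,T]$ is bounded, those Lipschitz bounds convert into H\"older bounds through $\vert t-t'\vert\leqslant T^{1-2\theta}\vert t-t'\vert^{2\theta}$.

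The operator factors are governed by the hypotheses on the family: \eqref{Hypoth2} and \eqref{Hypothe 3} give $\Vert\partial_{t}B_{z}(t)\Vert,\Vert\partial_{t}^{2}B_{z}(t)\Vert\leqslant C/\vert z+\lambda\vert$ after the shift $z\mapsto z+\lambda$ permitted by the Remark, while the H\"older estimate \eqref{Hypothese4} furnishes $\Vert\partial_{t}^{2}B_{z}(t)-\partial_{t}^{2}B_{z}(t')\Vert\leqslant C\vert t-t'\vert^{2\theta}/\vert z+\lambda\vert$ (the corresponding estimate for $\partial_{t}B_{z}$ following by interpolation between \eqref{Hypoth2} and \eqref{Hypothese4}). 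Inserting these bounds and integrating in $s$ reduces every term to a contour integral of the type $\vert t-t'\vert^{2\theta}\int_{\gamma}\vert z\vert^{-a}\vert z+\lambda\vert^{-b}\,\vert dz\vert$ with $a\geqslant 1/2$, $b\geqslant 1$ and $a+b>1$, which converges on $\gamma$ (where $\vert z\vert\geqslant r_{0}$) and is bounded uniformly in $t,t'$, decaying moreover in $\lambda$.

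The main obstacle, as usual in this Tanabe-type argument, is the behaviour of the kernel difference near the diagonal $s=t$ and the recovery of the sharp exponent $2\theta$ uniformly in $t,t'$: one must balance the increment $\vert t-t'\vert$ against the scale $\vert z\vert^{-1/2}$ when integrating along $\gamma$, splitting the contour (or the $s$-integral) into a near part and a far part when convenient, and exploiting the exponential factor to keep the $s$-integrals finite. A secondary point requiring care is that differentiating the representation \eqref{ReprVari} twice generates, besides the volume terms handled above, the boundary contributions coming from the $t$-dependent integration limits (the terms of type $\Pi_{n,\varepsilon}^{2}$ and $\Pi_{n,\varepsilon}^{3}$ in the preceding proof); these must also be shown to be H\"older of order $2\theta$, which again follows from the same kernel and resolvent estimates. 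Collecting all the pieces yields the desired seminorm bound, and hence $R_{\lambda}(g_{n}^{\ast})\in C^{2\theta}([0,T];E)$.
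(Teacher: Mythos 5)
Your proposal is correct in substance, but it is necessarily a different route from the paper's, because the paper gives no argument at all here: its entire proof is the sentence ``adapt the techniques of Proposition 4.3 in \cite{Labbas}'' (an unpublished thesis). What you supply is the expected Tanabe/Acquistapace--Terreni-type argument that such a citation stands in for, and your key reduction is the right one: since $g_{n}^{\ast}$ enters $R_{\lambda}$ only under the $s$-integral, the H\"older regularity in $t$ is produced entirely by the kernel $K_{\sqrt{-z}}$ and the resolvent family, so it suffices to prove that $R_{\lambda}$ maps $L^{\infty}([0,T];E)$ boundedly into $C^{2\theta}([0,T];E)$ and then feed in the $g_{n}^{\ast}\in L^{\infty}$ produced by the preceding fixed-point proposition. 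Your add-and-subtract decomposition works: the piece carrying the resolvent increment is handled by \eqref{Hypothese4} (note that as printed the two inequalities there are identical and only cover $\partial_{t}^{2}$; for the $\partial_{t}$-increment your interpolation remark is actually unnecessary, since \eqref{Hypothe 3} already gives a Lipschitz bound $\Vert\partial_{t}B_{z}(t)-\partial_{t}B_{z}(t')\Vert\leqslant C\vert t-t'\vert/(1+\vert z\vert)$, which is stronger), and the piece carrying the kernel increment is handled by interpolating $\min\{C/\vert z\vert^{a},\,C\vert t-t'\vert/\vert z\vert^{a-1/2}\}\leqslant C\vert t-t'\vert^{2\theta}\vert z\vert^{-a+\theta}$, after which all contour integrals converge on $\gamma$ because $\vert z\vert\geqslant r_{0}$ there and the total exponent exceeds $1$ at infinity. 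Two small corrections: your closing worry about the boundary contributions of type $\Pi_{n,\varepsilon}^{2}$, $\Pi_{n,\varepsilon}^{3}$ is moot for this proposition, since $R_{\lambda}$ is \emph{defined} as the two double integrals and no differentiation of the representation is being performed here; and the jump of $\partial_{t}K_{\sqrt{-z}}(\cdot,s)$ across $s=t$ (the moving discontinuity between $t'$ and $t$) should be absorbed by integrating the $O(1)$ difference over an interval of length $\vert t-t'\vert$, which is consistent with your stated $\min$-bounds. With those points made explicit, your sketch is a complete and self-contained proof, which is more than the paper offers.
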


\begin{proof}
It suffices to adapt the same techniques delivered in the proof \ of
Proposition 4.3 in \cite{Labbas}.
\end{proof}

This justifies the following result:

\begin{thm}
Let $g_{n}\in C^{2\theta }([0,T];E)$. Then, there exist $\lambda ^{\ast }>0$
such that for all $\lambda \geqslant \lambda ^{\ast }$, the function $w_{n}$
given in the representation (\ref{ReprVari}) is the unique strict solution
of Problem (\ref{EquVariable}) satisfying 
\begin{equation*}
w_{n}\left( .\right) ,\text{ }(A_{n}\left( .\right) -\lambda
)^{-1}w_{n}(.)\in C^{2\theta }([0,T];E).
\end{equation*}
\end{thm}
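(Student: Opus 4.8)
The plan is to read the theorem off the four propositions proved above, which already carry out the analytic work, so that what remains is to assemble them, to verify the anti-periodic boundary conditions directly from the kernel, to extract the $C^{2\theta}$-regularity from the explicit representation, and to settle uniqueness.

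I would start by fixing $g_n\in C^{2\theta}([0,T];E)$ and manufacturing the auxiliary datum $g_n^{\ast}$. Since $C^{2\theta}([0,T];E)\subset L^{\infty}([0,T];E)$, the proposition asserting unique solvability of $g_n=g_n^{\ast}-R_{\lambda}(g_n^{\ast})$ in $L^{\infty}$ provides, for every $\lambda\geq\lambda^{\ast}$, a unique $g_n^{\ast}\in L^{\infty}([0,T];E)$; equivalently $g_n^{\ast}=(I-R_{\lambda})^{-1}g_n$, the inverse existing because $\|R_{\lambda}\|_{L(L^{\infty}([0,T];E))}=C/\lambda<1$ by (\ref{estimation on R}). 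To promote $g_n^{\ast}$ to $C^{2\theta}$ I would rewrite this as $g_n^{\ast}=g_n+R_{\lambda}(g_n^{\ast})$: the first summand lies in $C^{2\theta}$ by hypothesis and the second lies in $C^{2\theta}$ by the proposition immediately preceding the theorem (enlarging $\lambda^{\ast}$ if needed), so $g_n^{\ast}\in C^{2\theta}([0,T];E)$, which is precisely the hypothesis the construction of $w_n$ requires.

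With $g_n^{\ast}\in C^{2\theta}$ in hand I would define $w_n$ by (\ref{ReprVari}). The proposition on the smoothness of this representation then gives $w_n\in C^{2}([0,T];E)$ with $w_n(t)\in D(A_n(t))$ for every $t$, while the proposition computing $w_n''$ gives $w_n''+(A_n(t)-\lambda)w_n=g_n^{\ast}-R_{\lambda}(g_n^{\ast})=g_n$, so the abstract equation in (\ref{EquVariable}) holds; since $w_n''$ and $g_n$ are continuous, $t\mapsto(A_n(t)-\lambda)w_n(t)=g_n(t)-w_n''(t)$ is continuous as well, so $w_n$ is a strict solution. The anti-periodic conditions are built into the kernel: a direct evaluation of (\ref{greenkernel}) gives $K_{\sqrt{-z}}(0,s)+K_{\sqrt{-z}}(T,s)=0$ for all admissible $s$ and $z$, and the analogous cancellation for $\partial_{t}K_{\sqrt{-z}}$, read off from the first step of the computation of $w_n'$, yields $w_n(0)+w_n(T)=0$ and $w_n'(0)+w_n'(T)=0$.

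For the regularity assertion I would return to the closed formula for $(A_n(t)-\lambda)^{-1}w_n(t)$ displayed at the end of the smoothness proposition's proof. Its last term is $-g_n^{\ast}(t)\in C^{2\theta}$, and the two remaining Dunford integrals are $C^{2\theta}$ in $t$ by the same mechanism used for $R_{\lambda}$: one splits off $g_n^{\ast}(s)-g_n^{\ast}(t)$, bounds the kernel using Lemma \ref{LemmeMinDenomP+}, controls the $t$-dependence of the resolvent via (\ref{Hypoth2})--(\ref{Hypothese4}), and checks that the $z$-integration converges after these estimates; combined with $w_n\in C^{2}\hookrightarrow C^{2\theta}$ this yields $w_n(\cdot),(A_n(\cdot)-\lambda)^{-1}w_n(\cdot)\in C^{2\theta}([0,T];E)$. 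For uniqueness I would take the difference $u$ of two strict solutions, so that $u$ solves the homogeneous problem; the constant-coefficient homogeneous problem admits only $u\equiv0$ because Lemma \ref{LemmeMinDenomP+} keeps $1+e^{-T\sqrt{-z}}$ bounded away from zero on $\gamma$, so the constant-coefficient Green operator is a genuine inverse, and the invertibility of $I-R_{\lambda}$ transfers this conclusion to the variable-coefficient problem. I expect the main obstacle to be this regularity step: proving that the singular Dunford integrals defining $(A_n(t)-\lambda)^{-1}w_n(t)$ depend $C^{2\theta}$-continuously on $t$, where the commutator estimates for the $t$-dependent resolvent must be balanced against the weak $z$-integrability furnished by the Green kernel.
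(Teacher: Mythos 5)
Your overall architecture coincides with what the paper intends: the theorem is stated there with no written proof beyond the phrase ``this justifies the following result,'' and the intended argument is precisely your assembly --- solve $g_{n}=(I-R_{\lambda})g_{n}^{\ast}$ in $L^{\infty}([0,T];E)$ for $\lambda\geq\lambda^{\ast}$ using (\ref{estimation on R}), bootstrap $g_{n}^{\ast}=g_{n}+R_{\lambda}(g_{n}^{\ast})\in C^{2\theta}([0,T];E)$ via the preceding proposition, and feed $g_{n}^{\ast}$ into (\ref{ReprVari}) so that the first two propositions give a $C^{2}$ function satisfying the equation with right-hand side $g_{n}^{\ast}-R_{\lambda}(g_{n}^{\ast})=g_{n}$. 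That part is correct and is all the paper itself supplies.

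There is, however, a genuine gap at the one point where you go beyond the paper, namely the verification of the anti-periodic conditions. The cancellation $K_{\sqrt{-z}}(0,s)+K_{\sqrt{-z}}(T,s)=0$ is correct and does give $w_{n}(0)+w_{n}(T)=0$ for the constant-coefficient formula (\ref{ReprConstant}); but in (\ref{ReprVari}) the resolvent is frozen at the evaluation point, so
\begin{equation*}
w_{n}(0)+w_{n}(T)=-\frac{1}{2i\pi }\int_{\gamma }\int_{0}^{T}K_{\sqrt{-z}}(0,s)\left[ (A_{n}(0)-\lambda -z)^{-1}-(A_{n}(T)-\lambda -z)^{-1}\right] g_{n}^{\ast }(s)\,ds\,dz,
\end{equation*}
which the kernel antisymmetry alone does not annihilate unless $A_{n}(0)=A_{n}(T)$ or a further correction term is introduced; the same issue, aggravated by the terms containing $\frac{\partial }{\partial t}(A_{n}(t)-\lambda -z)^{-1}$, affects $w_{n}^{\prime }(0)+w_{n}^{\prime }(T)$. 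Your one-line kernel computation therefore does not close this step. A smaller point: your uniqueness argument (``the invertibility of $I-R_{\lambda}$ transfers the conclusion'') is too loose as phrased; the mechanism available in the paper is the identity, obtained in the subsequent Corollary by integration by parts, that every strict solution of (\ref{EquVariable}) satisfies $((I+R_{\lambda})w_{n})(t)=-\frac{1}{2i\pi }\int_{\gamma }\int_{0}^{T}K_{\sqrt{-z}}(t,s)(A_{n}(t)-\lambda -z)^{-1}g_{n}(s)\,ds\,dz$, after which uniqueness follows from the invertibility of $I+R_{\lambda}$ for $\lambda$ large. The remaining step of your plan, the $C^{2\theta}$ regularity of $(A_{n}(\cdot)-\lambda)w_{n}(\cdot)$ via the closed formula at the end of the first proposition's proof together with (\ref{Hypoth2})--(\ref{Hypothese4}), is consistent with the paper's techniques.
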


As a consequence, we have

\begin{cor}
Let $g_{n}^{\ast }\in L^{\infty }([0,T];E)$. Then there exists $\lambda
^{\ast }>0$ and $C>0$ such that for all $\lambda \geqslant \lambda ^{\ast }$%
, the strict solution $w_{n}$ given by (\ref{ReprVari}) fulfills the
estimate 
\begin{equation}
\max \limits_{t}\left \Vert w_{n}(t)\right \Vert _{E}\leq C
\label{important estimate}
\end{equation}
\end{cor}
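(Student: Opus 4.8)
The plan is to read the bound directly off the integral representation (\ref{ReprVari}), \emph{without} using the H\"older decomposition from the first Proposition: that splitting relied on the seminorm $\norm{g_n^*}_{C^{2\theta}}$, which is unavailable when $g_n^*$ is merely essentially bounded. Instead I would move the $E$-norm inside both integrals and estimate each factor separately,
\[
\norm{w_n(t)}_E\leq \frac{1}{2\pi}\,\norm{g_n^*}_{L^\infty([0,T];E)}\int_\gamma \norm{(A_n(t)-\lambda-z)^{-1}}_{L(E)}\left(\int_0^T\abs{K_{\sqrt{-z}}(t,s)}\,ds\right)\abs{dz}.
\]

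First I would control the kernel. Lemma \ref{LemmeMinDenomP+} bounds the denominator $\abs{1+e^{-T\sqrt{-z}}}$ from below by $C(\delta_0)$ uniformly on $\gamma$, so integrating the exponentials in (\ref{greenkernel}) gives $\sup_t\int_0^T\abs{K_{\sqrt{-z}}(t,s)}\,ds\leq C/\abs{z}$, exactly the estimate opening the proof of the first Proposition. The resolvent factor is supplied by (\ref{Hypoth1}): after the admissible shift $z\mapsto z+\lambda$ recorded in the Remark, one has $\norm{(A_n(t)-\lambda-z)^{-1}}_{L(E)}\leq M/(\abs{z}+1)$ uniformly in $t$. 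The product of the two factors is therefore $O(\abs{z}^{-\beta})$ with $\beta>1$, so the $\gamma$-integral converges: near infinity the integrand is an integrable power of $1/\abs{z}$, and near the origin $\gamma$ stays on the circle $\abs{z}=r_0>0$. This yields $\norm{w_n(t)}_E\leq C\,\norm{g_n^*}_{L^\infty([0,T];E)}$ for every $t$, and taking the maximum over $t\in[0,T]$ gives (\ref{important estimate}).

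The only delicate point --- and the actual content of the corollary --- is that $C$ must be independent of $n$. This is exactly what the Remark provides: the constants $M$ and $C$ in (\ref{Hypoth1})-(\ref{Hypothese4}) do not depend on $t$, hence not on the shift $t_n$, while the constant $C(\delta_0)$ of Lemma \ref{LemmeMinDenomP+} is purely geometric. Since none of these sees the index $n$, neither does the final $C$. If one prefers the right-hand side of (\ref{important estimate}) to be a genuine constant rather than a multiple of $\norm{g_n^*}_{L^\infty}$, it suffices to recall from the preceding Proposition that $g_n^*=(I-R_\lambda)^{-1}g_n$ with $\norm{R_\lambda}_{L(L^\infty)}<1$, so that $\norm{g_n^*}_{L^\infty}$ is dominated by $\norm{g_n}_{L^\infty}$ uniformly in $n$.
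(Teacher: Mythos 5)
Your argument is correct, and it reaches the bound by a genuinely different (and more direct) route than the paper. The paper's sketch does not estimate (\ref{ReprVari}) term by term: it substitutes $g_{n}(s)=w_{n}^{\prime \prime }(s)+(A_{n}(s)-\lambda )w_{n}(s)$ into the integral $-\frac{1}{2i\pi }\int_{\gamma }\int_{0}^{T}K_{\sqrt{-z}}(t,s)(A_{n}(t)-\lambda -z)^{-1}g_{n}(s)\,ds\,dz$, integrates by parts in $s$, and arrives at the integral equation $(1+R_{\lambda })(w_{n})(t)=-\frac{1}{2i\pi }\int_{\gamma }\int_{0}^{T}K_{\sqrt{-z}}(t,s)(A_{n}(t)-\lambda -z)^{-1}g_{n}(s)\,ds\,dz$, which it then inverts by a Neumann series thanks to the smallness (\ref{estimation on R}) of $R_{\lambda }$ for $\lambda \geqslant \lambda ^{\ast }$; the remaining integral is bounded by $C\norm{g_{n}}_{L^{\infty }}$ by exactly the computation you perform. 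You instead bound the defining formula (\ref{ReprVari}) directly in terms of $\norm{g_{n}^{\ast }}_{L^{\infty }}$ --- which works because, unlike in the first Proposition where one must apply $A_{n}(t)-\lambda $ to $w_{n}(t)$ and therefore needs the H\"older gain $\abs{z}^{-\theta }$ from the splitting $g_{n}^{\ast }(s)-g_{n}^{\ast }(t)$, here the product of the kernel bound $C/\abs{z}$ (via Lemma \ref{LemmeMinDenomP+} and $\RE \sqrt{-z}\geq c(\delta _{0})\abs{z}^{1/2}$ on $\gamma $) with the resolvent bound $M/(\abs{z}+1)$ from (\ref{Hypoth1}) is already $O(\abs{z}^{-2})$, hence integrable on $\gamma $ --- and you then convert $\norm{g_{n}^{\ast }}_{L^{\infty }}$ into $\norm{g_{n}}_{L^{\infty }}\leq \norm{g}_{L^{\infty }}$ through $g_{n}^{\ast }=(I-R_{\lambda })^{-1}g_{n}$. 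The two arguments thus rest on the same two ingredients (kernel/resolvent decay along $\gamma $ and the invertibility furnished by (\ref{estimation on R})), applied in a different order; yours buys a shorter path, avoiding the integration by parts in $s$ and the $s$-derivatives of the resolvent it requires, and it states explicitly why the final constant in (\ref{important estimate}) is independent of $n$, which is the actual point of the corollary and is left implicit in the paper's sketch.
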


\begin{proof}[Sketch of the proof]
The calculus are very cumbersome, we just give the main line of the
demonstration. We have
\begin{equation*}
w_{n}^{\prime \prime }(s)+(A_{n}\left( s\right) -\lambda
)^{-1}w_{n}(s)=g_{n}\left( s\right) ,
\end{equation*}%
then, we may deduce that%
\begin{eqnarray*}
w_{n}(t) &=&-\frac{1}{2i\pi }\int_{\gamma }\int_{0}^{T}K_{\sqrt{-z}}\left(
t,s\right) (A_{n}\left( t\right) -\lambda -z)^{-1}g_{n}(s)dsdz \\
&=&-\frac{1}{2i\pi }\int_{\gamma }\int_{0}^{T}K_{\sqrt{-z}}\left( t,s\right)
(A_{n}\left( t\right) -\lambda -z)^{-1}w_{n}^{\prime \prime }(s)dsdz \\
&&-\frac{1}{2i\pi }\int_{\gamma }\int_{0}^{T}K_{\sqrt{-z}}\left( t,s\right)
(A_{n}\left( t\right) -\lambda -z)^{-1}(A_{n}\left( s\right) -\lambda
)^{-1}w_{n}(s)dsdz.
\end{eqnarray*}%
After using integration by parts, we deduce that 
\begin{eqnarray*}
&&-\frac{1}{2i\pi }\int_{\gamma }\int_{0}^{T}K_{\sqrt{-z}}\left( t,s\right)
(A_{n}\left( t\right) -\lambda -z)^{-1}g_{n}(s)dsdz \\
&=&w_{n}(t)+\frac{1}{i\pi }\int_{\gamma }\int_{0}^{T}\frac{\partial }{%
\partial t}K_{\sqrt{-z}}(t,s)\frac{\partial }{\partial s}(A_{n}\left(
s\right) -\lambda -z)^{-1}w_{n}(s)dsdz \\
&&-\frac{1}{2i\pi }\int_{\gamma }\int_{0}^{T}K_{\sqrt{-z}}(t,s)\frac{%
\partial ^{2}}{\partial s^{2}}(A_{n}\left( s\right) -\lambda
-z)^{-1}w_{n}(s)dsdz,
\end{eqnarray*}%
so that
\begin{equation*}
-\frac{1}{2i\pi }\int_{\gamma }\int_{0}^{T}K_{\sqrt{-z}}\left( t,s\right)
(A_{n}\left( t\right) -\lambda -z)^{-1}g_{n}(s)dsdz=\left( 1+R_{\lambda
}(w_{n})\right) (t).
\end{equation*}%
At this level, it is easy to see that the result is a direct consequence of
the estimate (\ref{estimation on R}).
\end{proof}

\section{Coming back to the singular cylindrical domain}

Coming back to the problem (\ref{approached ADE in Qn}) one obtains, for $t\geqslant t_{n},$ 
\begin{equation*}
v_{n}(t)=w_{n}(t-t_{n}).
\end{equation*}%
Thanks to Proposition \ref{important estimate}, a classical argument allows
us to extract a convergent subsequence%
\begin{equation*}
v_{nj}:=v\left( t_{nj}\right) ,
\end{equation*}%
where%
\begin{equation*}
\lim \limits_{n\rightarrow +\infty }t_{nj}=0.
\end{equation*}%
Then, after a passage to the limit, we deduce the following important result

\begin{thm}
Let $g\in C^{2\theta }([0,T];E)$. Then, there exists $\lambda ^{\ast }>0$
such that for $\lambda \geqslant \lambda ^{\ast }$, the problem 
\begin{equation*}
\begin{array}{ll}
v^{\prime \prime }(t)+A\left( t\right) v(t)-\lambda v(t)=f(t), & t\geq 0, \\ 
v(0)+v(T)=0, &  \\ 
v^{\prime }(0)+v^{\prime }(T)=0 & 
\end{array}%
\end{equation*}%
admits a unique strict solution satisfying 
\begin{equation*}
v\left( .\right) ,\text{ }(A\left( .\right) -\lambda )^{-1}v(.)\in
C^{2\theta }([0,T];E).
\end{equation*}
\end{thm}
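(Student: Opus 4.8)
The plan is to obtain $v$ as the limit of the strict solutions $v_n(t)=w_n(t-t_n)$ of the regularized problems (\ref{approached ADE in Qn}), exploiting that the constants produced in Sections~2--3 are independent of $n$. By the preceding Theorem each $v_n$ is the unique strict solution on $[t_n,T]$ and, together with $(A_n(\cdot)-\lambda)^{-1}v_n$, belongs to $C^{2\theta}([t_n,T];E)$; moreover the Corollary furnishes the uniform bound $\max_t\|v_n(t)\|_E\le C$. Since all the resolvent estimates (\ref{Hypoth1})--(\ref{Hypothese4}), and hence the representation (\ref{ReprVari}), carry constants independent of $n$, I would first upgrade this to uniform bounds for $\{v_n\}$ and $\{(A_n(\cdot)-\lambda)^{-1}v_n\}$ in $C^{2\theta}([0,T];E)$ (after the shift $w_n(t)=v_n(t+t_n)$), as well as a uniform bound for $t\mapsto(A_n(t)-\lambda)v_n(t)$ in $C([0,T];E)$ read off from the equation.

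Next I would extract a convergent subsequence by an Arzel\`a--Ascoli argument. Equicontinuity of $\{v_n\}$ follows from the uniform $C^{2\theta}$ bound. For pointwise precompactness in $E=C(D)$ I would use the identity $v_n(t)=(A_n(t)-\lambda)^{-1}\big[(A_n(t)-\lambda)v_n(t)\big]$ together with the fact that, by (\ref{Operator Ln}), the resolvent $(A(t)-\lambda)^{-1}$ has range in $W^{2,p}(D)\cap C_0(D)$ with $p>2$; since $D\subset\mathbb{R}^2$ is bounded, the Rellich--Kondrachov embedding $W^{2,p}(D)\hookrightarrow\hookrightarrow C(D)$ is compact, so $A(t)$ has compact resolvent and the uniformly graph-bounded family $\{v_n(t)\}$ is relatively compact in $E$. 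Arzel\`a--Ascoli then yields a subsequence $v_{n_j}\to v$ in $C([0,T];E)$, and the analogous bounds for the derivatives give convergence of $v_{n_j}'$ and of $(A_{n_j}(\cdot)-\lambda)^{-1}v_{n_j}$.

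Then I would pass to the limit. The regularity in $t$ of $t\mapsto(A(t)-\lambda-z)^{-1}$ recorded in Section~2 together with $t_{n_j}\to 0$ gives $(A_{n_j}(t)-\lambda-z)^{-1}\to(A(t)-\lambda-z)^{-1}$; combined with the closedness of $A(t)$ this transfers the inclusion $v_{n_j}(t)\in D(A_{n_j}(t))$ to $v(t)\in D(A(t))$ and yields $v''+A(t)v-\lambda v=f$. The anti-periodic relations $v_{n_j}(t_{n_j})+v_{n_j}(T)=0$ and $v_{n_j}'(t_{n_j})+v_{n_j}'(T)=0$ pass to $v(0)+v(T)=0$ and $v'(0)+v'(T)=0$ by uniform convergence of $v_{n_j}$ and $v_{n_j}'$. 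The claimed regularity $v,(A(\cdot)-\lambda)^{-1}v\in C^{2\theta}([0,T];E)$ survives by lower semicontinuity of the H\"older seminorm under uniform convergence. Uniqueness follows as for the regularized problems: for $\lambda\ge\lambda^{\ast}$ the representation (\ref{ReprVari}) and the invertibility underlying (\ref{estimation on R}) force the homogeneous problem to have only the trivial solution.

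The main obstacle is precisely the compactness step: the uniform estimate of the Corollary controls only $\|v_n\|_{C([0,T];E)}$, which in the infinite-dimensional space $E=C(D)$ is far from sufficient to extract a limit. The crucial extra input is the compact resolvent of $A(t)$, inherited from the Sobolev embedding $W^{2,p}(D)\hookrightarrow\hookrightarrow C(D)$; this is what converts pointwise boundedness into pointwise precompactness and legitimizes the Arzel\`a--Ascoli extraction. A secondary technical point is keeping every constant uniform in $n$ throughout, so that the passage to the limit takes place in the fixed space $C^{2\theta}([0,T];E)$ rather than on the shrinking intervals $[t_n,T]$.
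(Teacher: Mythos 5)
Your proposal follows the same route as the paper: approximate by the regularized problems on $[t_n,T]$, use the $n$-independent bound of the Corollary (together with the $n$-independence of the constants in (\ref{Hypoth1})--(\ref{Hypothese4})) to extract a convergent subsequence $v_{n_j}$ with $t_{n_j}\to 0$, and pass to the limit in the equation and in the anti-periodic conditions. The paper compresses the whole argument into the phrase ``a classical argument allows us to extract a convergent subsequence,'' so your compact-resolvent/Arzel\`a--Ascoli mechanism is exactly the detail the paper leaves implicit rather than a genuinely different method.
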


Applying all the preceeding abstract results and Lemma \ref{effect of change of variables}, our main results concerning the
transformed problem (\ref{transformed problem}) are formulated as follows:

\begin{thm}
\label{resultat_problemTransf}\textit{Let }$f\in C^{2\theta }(\overline{Q})$%
, $0<2\theta <1$. Then, there exists $\lambda ^{\ast }>0$ such that for $%
\lambda \geqslant \lambda ^{\ast }$, Problem (\ref{transformed problem}) 
\textit{has a unique strict solution }$v\in C^{2}\left( \overline{Q}\right) .
$ Moreover, $v$ satisfies the maximal regularity%
\begin{equation*}
\left \{ 
\begin{array}{l}
\partial _{t}^{2}v\in C^{2\theta }(\overline{Q}), \\ 
\text{and} \\ 
\dfrac{1}{\varphi ^{2}\left( t\right) }\Delta v+\dfrac{\varphi ^{\prime
}\left( t\right) }{\varphi ^{2}\left( t\right) }\left \{ \xi \partial _{\xi
}+\eta \partial _{\eta }\right \} v-\lambda v\in C^{2\theta }(\overline{Q}).%
\end{array}%
\right. 
\end{equation*}
\end{thm}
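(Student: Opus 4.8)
The plan is to derive the statement from the abstract machinery of the preceding sections by unwinding the vector-valued formulation, the genuinely new analytic work having already been carried out there. First I would record that, under the identification $f(t)(\xi,\eta)=f(t,\xi,\eta)$, the hypothesis $f\in C^{2\theta}(\overline{Q})$ means precisely that $f\in C^{2\theta}([0,T];E)$ with $E=C(D)$, and that Lemma~\ref{effect of change of variables} ensures the change of variables (\ref{change of variables}) transports H\"older data without loss. The abstract theorem of the previous section, applied to the problem $v''+A(t)v-\lambda v=f$ under the anti-periodic conditions $v(0)+v(T)=0$ and $v'(0)+v'(T)=0$, then furnishes, for all $\lambda\geq\lambda^{\ast}$, a unique abstract strict solution $v:[0,T]\to E$ together with the maximal regularity of the two terms of the equation, namely
\begin{equation*}
\partial_t^2 v\in C^{2\theta}([0,T];E),\qquad (A(\cdot)-\lambda)v(\cdot)\in C^{2\theta}([0,T];E).
\end{equation*}

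Next I would reinterpret this object as a function on $\overline{Q}$. Through the canonical identification $C([0,T];C(D))\cong C(\overline{Q})$, the fact that $v\in C^{2}([0,T];E)$ is a strict solution gives the joint continuity of $v$ and of $\partial_t^2 v$, while the description (\ref{Operator Ln}) of $D(A(t))$ forces $v(t)\in W^{2,p}(D)\cap C_0(D)$ for each $t$, with $L(t)v(t)\in C(D)$ and the Dirichlet trace $v|_{[0,T]\times\partial D}=0$. The two asserted regularities are then read off term by term: since $(A(\cdot)-\lambda)v=L(\cdot)v-\lambda v$, the second displayed membership above is exactly the claim that $L(t)v-\lambda v\in C^{2\theta}(\overline{Q})$, and consequently $\partial_t^2 v=f-(L(\cdot)v-\lambda v)\in C^{2\theta}(\overline{Q})$ as a difference of two such functions. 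Uniqueness of $v$ is inherited directly from the uniqueness of the abstract strict solution.

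The step I expect to demand the most care is promoting the pointwise-in-$t$ elliptic information to the full $C^{2}(\overline{Q})$ regularity and controlling the spatial variables. For each fixed $t$ the slice $v(t)$ solves the uniformly elliptic Dirichlet problem $L(t)v(t)-\lambda v(t)=f(t)-\partial_t^2 v(t)$ on the fixed disc $D$, so interior and boundary Schauder estimates recover the missing spatial second derivatives; the delicate point is that the coefficients of $L(t)=\varphi^{-2}(t)\Delta+(\varphi'(t)/\varphi(t))\{\xi\partial_\xi+\eta\partial_\eta\}$ degenerate as $t\to 0^{+}$, where $\varphi(0)=\varphi'(0)=0$. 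I would meet this difficulty exactly as in the construction of $v$ itself: work first on the regular truncations $Q_n=[t_n,T]\times D$, where $\varphi$ is bounded below and the ellipticity constant of $L(t)$ stays uniformly controlled, so that the Schauder constants may be chosen independently of $t$ and $n$; then pass to the limit $t_n\to 0$ using the uniform bound (\ref{important estimate}) together with the temporal H\"older regularity supplied by the abstract theory. Combining the uniform spatial estimates with that temporal regularity yields the stated joint regularity on all of $\overline{Q}$.
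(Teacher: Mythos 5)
Your first two paragraphs coincide with what the paper actually does: Theorem \ref{resultat_problemTransf} is given no separate proof there, being presented as an immediate consequence of the abstract existence--uniqueness--maximal-regularity theorem for the anti-periodic problem with variable operator coefficients, read back through the identification of $C^{2\theta }(\overline{Q})$ with $C^{2\theta }([0,T];C(D))$ and the description (\ref{Operator Ln}) of $D(A(t))$; the two displayed memberships in the conclusion are literally the abstract maximal regularity of $w_{n}''$ and of $(A(\cdot )-\lambda )w_{n}$ transported back to $\overline{Q}$. Up to that point your argument is the paper's.

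The third paragraph, however, overreaches and contains a step that would fail. First, Schauder estimates cannot ``recover the missing spatial second derivatives'' from the sliced equation $L(t)v(t)-\lambda v(t)=f(t)-\partial _{t}^{2}v(t)$, because in this functional framework the right-hand side is only $C(D)$-valued: the space $E=C(D)$ carries no spatial H\"{o}lder regularity, and an elliptic equation with merely continuous data need not have a classical $C^{2}$ solution. The domain (\ref{Operator Ln}) yields only $v(t)\in W^{2,p}(D)$, $p>2$, hence $C^{1,\alpha }(\overline{D})$ by Sobolev embedding, and nothing more in space. Second, the claimed uniformity in $n$ of the ellipticity and Schauder constants on $Q_{n}$ is false: on $[t_{n},T]$ the coefficient $\varphi ^{-2}(t)$ is controlled only by $\varphi ^{-2}(t_{n})$, which blows up as $t_{n}\rightarrow 0$ since $\varphi (0)=0$, so constants obtained this way cannot be taken independent of $n$. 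Neither defect is fatal to the theorem as the paper intends it, because $C^{2}(\overline{Q})$ must there be read in the anisotropic sense $C^{2}([0,T];C(D))$ of a strict solution, exactly as $C^{2\theta }(\overline{Q})$ abbreviates $C^{2\theta }([0,T];C(D))$; under that reading your first two paragraphs already complete the proof and the entire third paragraph is unnecessary. If you insist on literal joint $C^{2}$ regularity in $(t,\xi ,\eta )$, you would need spatial H\"{o}lder regularity of $f$, which is not among the hypotheses.
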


\begin{thm}
\textit{Let }$h\in C^{2\theta }\left( \left[ 0,T\right] ;C\left( \Omega
\right) \right) ,0<2\theta <1.$ Then, there exists $\lambda ^{\ast
}>0$ such that for $\lambda \geqslant \lambda ^{\ast }$, Problem (\ref%
{transformed problem}) \textit{has a unique strict solution }$v\in
C^{2}\left( \Pi \right) .$ Moreover, $v$ satisfies the maximal regularity%
\begin{equation*}
\left \{ 
\begin{array}{l}
\partial _{t}^{2}v\in C_{w}^{2\theta }(\left[ 0,T\right] ;C(\Omega ))), \\ 
\text{and} \\ 
\Delta v-\lambda v\in C_{w}^{2\theta }(\left[ 0,T\right] ;C(\Omega ))),%
\end{array}%
\right. 
\end{equation*}%
where%
\begin{equation*}
C_{w}^{2\theta }(\left[ 0,T\right] ;C(\Omega ))=\left \{ h\in C^{2\theta }(%
\left[ 0,T\right] ;C(\Omega )):\left( \varphi \left( .\right) \right)
^{2\theta }h\in C^{2\theta }(\left[ 0,T\right] ;C(\Omega ))\right \} .
\end{equation*}
\end{thm}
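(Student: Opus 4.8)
The plan is to deduce the statement on the singular cylinder $\Pi$ from the regularity already proved on the regular cylinder $Q$ in Theorem \ref{resultat_problemTransf}, using the change of variables (\ref{change of variables})--(\ref{change of functions}) and Lemma \ref{effect of change of variables} to pass between the two functional settings. First I would check that the data are admissible: since $h \in C^{2\theta}([0,T];C(\Omega))$, part (1) of Lemma \ref{effect of change of variables} gives $f \in C^{2\theta}([0,T];C(D))$, so Theorem \ref{resultat_problemTransf} applies and produces a unique strict solution $v \in C^2(\overline{Q})$ of the transformed problem (\ref{transformed problem}), with $\partial_t^2 v \in C^{2\theta}(\overline{Q})$ and $L(t)v - \lambda v \in C^{2\theta}(\overline{Q})$, for every $\lambda \geq \lambda^*$.

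Next I would invert the change of variables, defining the candidate solution on $\Pi$ through (\ref{change of functions}) by $u(t,x,y) = v(t, x/\varphi(t), y/\varphi(t))$ (this is the function denoted $v$ in the statement). Because the change of variables (\ref{change of variables}) is a smooth diffeomorphism of each regular piece $\Pi_n$ onto $Q_n$ that carries the anti-periodic and lateral conditions of (\ref{transformed problem}) onto (\ref{initial conditions})--(\ref{boundary conditions}), the function $u$ is of class $C^2$ on $\Pi$ away from the apex and solves (\ref{wave operator})--(\ref{boundary conditions}); its uniqueness is inherited from the uniqueness in Theorem \ref{resultat_problemTransf}, since the correspondence $u \leftrightarrow v$ of (\ref{change of functions}) is a bijection between strict solutions. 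For the regularity I would record how the differential operators transform: the spatial part satisfies $\Delta_{x,y} u = \varphi^{-2}\Delta_{\xi,\eta} v$, while the time derivative carries the first-order correction $\partial_t u = \partial_t v - (\varphi'/\varphi)(\xi\partial_\xi + \eta\partial_\eta)v$, so that the combinations occurring in the equation are exactly the pull-backs of the quantities controlled on $Q$.

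The decisive step is then to convert the $C^{2\theta}(\overline{Q})$ bounds into weighted H\"older regularity on $\Pi$. Here I would invoke part (2) of Lemma \ref{effect of change of variables} term by term: any quantity lying in $C^{2\theta}([0,T];C(D))$ in the $(\xi,\eta)$ picture pulls back, after multiplication by $(\varphi(\cdot))^{2\theta}$, to a function in $C_w^{2\theta}([0,T];C(\Omega))$. Applying this to $\partial_t^2 v$ and to the elliptic part $L(t)v - \lambda v$, and using the transformation rules above to identify the pulled-back quantities with $\partial_t^2 u$ and $\Delta u - \lambda u$, yields precisely the two claimed memberships $\partial_t^2 v \in C_w^{2\theta}([0,T];C(\Omega))$ and $\Delta v - \lambda v \in C_w^{2\theta}([0,T];C(\Omega))$.

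The main obstacle I expect is the degeneration of the change of variables at $t=0$, where $\varphi(0)=\varphi'(0)=0$: the map fails to be a diffeomorphism up to the apex, and the H\"older seminorm in $t$ of a pulled-back function is distorted there. The weight $(\varphi(\cdot))^{2\theta}$ in the definition of $C_w^{2\theta}$ is introduced exactly to absorb this distortion, and the whole analytic burden of showing that the transferred quantities remain in the weighted space---rather than merely in $C^{2\theta}$ on each regular piece $\Pi_n$---is carried by Lemma \ref{effect of change of variables}(2). A secondary point requiring care is the first-order correction in $\partial_t u$, which must be shown not to degrade the weighted regularity of $\partial_t^2 u$; once this is controlled, the two regularity statements assemble at once from the transformed equation.
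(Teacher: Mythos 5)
Your proposal follows exactly the route the paper intends: the paper gives no explicit proof of this theorem, stating only that it follows by ``applying all the preceding abstract results and Lemma \ref{effect of change of variables}'', which is precisely your strategy of feeding $h$ through part (1) of the lemma, invoking Theorem \ref{resultat_problemTransf} on $Q$, and pulling the regularity back to $\Pi$ via part (2). Your write-up is in fact more detailed than the paper's (it records the transformation rules for $\partial_t$ and $\Delta$ and flags the degeneration at the apex), and is consistent with the paper's argument.
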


{\bf Acknowledgments.} The second named author is partially supported by grant 174024 of Ministry
of Science and Technological Development, Republic of Serbia.

\bibliographystyle{amsplain}

\end{document}